\documentclass[a4paper,dvips,12pt,oneside]{article}
\usepackage[makeroom]{cancel}
\usepackage{graphicx}
\usepackage{epstopdf}
\usepackage{subfig}
\usepackage{color}
\usepackage{float}
\usepackage{leftidx}
\usepackage{bigints}
\usepackage[affil-it]{authblk}
\setlength{\parindent}{0pt}
\captionsetup[subfigure]{labelformat=empty}
\usepackage{amssymb,amsmath,dsfont,url,epsfig}
\usepackage[left=1in, right=1in, top=1.1in, bottom=1.1in, includefoot, headheight=13.6pt]{geometry}
\usepackage[T1]{fontenc}
\usepackage{titlesec, blindtext, color}
\definecolor{gray75}{gray}{0.75}

\newcommand{\sln}{\linespread{1}}
\newcommand*{\email}[1]{\href{mailto:#1}{\nolinkurl{#1}} } 
\titleformat{\chapter}[block]{\LARGE\bfseries\sln}{Chapter \thechapter}{11pt}{\newline\huge\bfseries}
\newtheorem{theorem}{Theorem}[section]
\newtheorem{remark}{Remark}[section]
\newtheorem{definition}{Definition}[section]

\newenvironment{proof}{\paragraph{Proof:}}{\hfill$\square$}
\newtheorem{lemma}{Lemma}[section]
\newtheorem{proposition}{Proposition}[section]

\begin{document}
\title{The Isoperimetric Problem in Randers Poincar\'e Disc}

\author[1]{Arti Sahu Gangopadhyay
  \thanks{E-mail: \texttt{arti.sahu@bhu.ac.in}}}
\affil[1,4]{DST-CIMS, Banaras Hindu University, Varanasi-221005, India}

\author[2]{Ranadip Gangopadhyay
  \thanks{E-mail: \texttt{gangulyranadip@gmail.com}}}

\author[3]{Hemangi Madhusudan Shah
  \thanks{E-mail: \texttt{hemangimshah@hri.res.in}}}
\affil[3]{Harish-Chandra Research Institute, A CI of Homi Bhaba National Institute, Chhatnag Road, Jhunsi, Prayagraj-211019, India}

\author[4]{Bankteshwar Tiwari
  \thanks{E-mail: \texttt{btiwari@bhu.ac.in}}}

\maketitle

\begin{abstract}
It is known that a simply connected Riemann surface satisfies the isoperimetric equality if and only if it has constant  Gaussian curvature. In this article, we show that Randers Poincar\'e disc satisfies the isoperimetric equality with respect to different volume forms. These metrics do not necessarily have constant (negative) flag curvature. Thus we show that the Osserman's result \cite{RO} in the Riemannian case can not be extended to the corresponding Finslerian case.\\

AMS Mathematics Subject Classification: 53B40, 53C60.\\

keywords: Isoperimetric problem, Randers Poincar\'e disc , calculus of variations, conjugate points. \\
\end{abstract}

\section{Introduction}
The isoperimetric problem is a more than two millenia old problem which says that: among all the simple closed curve with fixed perimeter, the circle encloses the maximum area.   For the non-zero constant curvature spaces the isoperimetric inequality is given by
\begin{equation*}
L^2\ge 4\pi A-kA^2,
\end{equation*}
where $L$ is the length of a simple closed curve, $A$ is the area bounded by the curve and  $k$ is the curvature of the space. The isoperimetric problem has several important applications in mathematical physics. In 1903,  Poincar\'e stated with incomplete (variational) proof that of all the solids with a given volume the sphere has the minimum electrostatic capacity  and  Szeg\"{o} gave the first complete proof for Poincar\'e's theorem on electrostatic capacity  in 1930 \cite{GP}. In recent years scientists came to know that an incredibly intense magnetic fields can arise in several realistic physical system (for example, certain kind of neutron star). To study these kind of  physical problems, the knowledge of the isoperimetric problem in hyperbolic manifolds and asymptotic hyperbolic manifolds turns out to be essential \cite{BM}. The study of the isoperimetric inequality has been done extensively by several mathematicians, for example see \cite{AGBE,MG,MP,STY}. In Finsler geometry,  Zhou studied the isoperimetric problem for the Berwald metric, \begin{equation*}
F(x,y)=\frac{(\sqrt{|y|^2-(|x|^2|y|^2)-\langle x,y\rangle^2}+\langle x,y\rangle)^2}{(1-|x|^2)(\sqrt{|y|^2-(|x|^2|y|^2)-\langle x,y\rangle^2)}}
\end{equation*}  with respect to the Busemann-Hausdorff volume form \cite{LZ1}. Zhan and Zhou studied this problem with respect to the Holmes-Thompson volume form \cite{LZ2} and showed that the circles $\gamma_0(t)=(a\cos t, a\sin t), a<1$, are the local solutions of the isoperimetric problem. Li and Mo \cite{LM} studied the isoperimetric problem for the two dimensional Funk metric
\begin{equation*}
F(x,y)=\frac{(\sqrt{|y|^2-(|x|^2|y|^2)-\langle x,y\rangle^2}+\langle x,y\rangle)}{2(1-|x|^2)}.
\end{equation*}
Recently, Sahu et al., \cite{AS} studied the isoperimetric problem for a special class of Randers metrics, which are deformations of Euclidean metrics by some suitable one-form $\beta$.\\

In this paper, we study the isoperimetric problem in Randers Poincar\'e disc model. In this model we consider the unit disc $\mathbb{D}$  in $\mathbb{R}^2$ with the Poincar\'e metric $\displaystyle{\alpha=\frac{2\sqrt{(dx^1)^2+(dx^2)^2}}{1-(x^1)^2-(x^2)^2}}$. Consider the Randers metric $F=\alpha+\beta$, which can be thought deformation of the Poincar\'e metric $\alpha$ by an exact one-form $\beta=df$, where $f:\mathbb{D}\to \mathbb{R}$ is a  smooth function with $\|\beta\|_{\alpha}$ is constant and $\|\beta\|_{\alpha}<1$. We call the so obtained Finsler space $(\mathbb{D},F)$ as \textit{Randers Poincar\'e disc}. In the present paper we study the isoperimetric problem in the Randers Poincar\'e disc $(\mathbb{D}, F)$.\\
In general, a metric of the form $F=\alpha+\beta$, where $\alpha$ is any Riemannian metric and $\beta$ is a one form with $\|\beta\|_{\alpha}<1$, is called the Randers metric and has several application in physics and biology.  Bao et. al., \cite{BRS} proved that the solution of a Zermelo Navigation problem
in a Riemannian manifold $(M, h)$ with a time independent vector field $W$ satisfying $h(W, W) < 1$, gives a unique Randers metric on the manifold $M$ and conversely.\\

Now we state our main results of this paper

\begin{proposition} \label{prop2.001}
 Let $(\mathbb{D},\alpha)$ be the hyperbolic Poincar\'e disc where, $\displaystyle{\alpha= \frac{2\sqrt{(dx^1)^2+(dx^2)^2}}{1-(x^1)^2-(x^2)^2}}$ and $\beta=df$ be an exact one form with constant norm and $||\beta||_{\alpha}<1$. Then $\beta$ can be explicitely expressed by
 \begin{equation*}
    \beta= b\frac{x^1 dx^1 +x^2 dx^2}{(1-(x^1)^2-(x^2)^2)\sqrt{(x^1)^2+(x^2)^2}} 
 \end{equation*} with $0<b<1$. Hence $F=\alpha+\beta$ is the Randers Poincar\'e metric on $\mathbb{D}$.
\end{proposition}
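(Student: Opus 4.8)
The plan is to turn the two constraints on $\beta$ — that it is exact and that $\|\beta\|_\alpha$ is a nonzero constant — into differential equations for the potential $f$ and to integrate them. Writing the Poincar\'e metric tensor as $a_{ij}=\dfrac{4}{(1-(x^1)^2-(x^2)^2)^2}\,\delta_{ij}$, so that $a^{ij}=\dfrac{(1-(x^1)^2-(x^2)^2)^2}{4}\,\delta^{ij}$, and $\beta=\partial_1 f\,dx^1+\partial_2 f\,dx^2$, the hypothesis $\|\beta\|_\alpha=\mathrm{const}$ reads
\begin{equation*}
\|\beta\|_\alpha^{2}=a^{ij}\partial_i f\,\partial_j f=\frac{(1-(x^1)^2-(x^2)^2)^2}{4}\big((\partial_1 f)^2+(\partial_2 f)^2\big)=c^2,
\end{equation*}
where $c:=\|\beta\|_\alpha\in(0,1)$ (the case $c=0$ is the trivial $\beta=0$). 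Passing to polar coordinates $x^1=\rho\cos\theta$, $x^2=\rho\sin\theta$, in which $\alpha^2=\dfrac{4}{(1-\rho^2)^2}\big(d\rho^2+\rho^2\,d\theta^2\big)$, this becomes
\begin{equation*}
\frac{(1-\rho^2)^2}{4}\Big((\partial_\rho f)^2+\rho^{-2}(\partial_\theta f)^2\Big)=c^2 .
\end{equation*}

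Suppose for the moment that $f$ depends on $\rho$ alone, i.e. $\partial_\theta f\equiv 0$. Then the last identity gives $(\partial_\rho f)^2=\dfrac{4c^2}{(1-\rho^2)^2}$; since $\partial_\rho f$ is continuous and nowhere zero it has a fixed sign, so $\partial_\rho f=\varepsilon\,\dfrac{2c}{1-\rho^2}$ with $\varepsilon\in\{+1,-1\}$, whence $f(\rho)=\varepsilon\,c\,\log\dfrac{1+\rho}{1-\rho}+\mathrm{const}$. Using $d\rho=\dfrac{x^1\,dx^1+x^2\,dx^2}{\rho}$ and $\rho=\sqrt{(x^1)^2+(x^2)^2}$ this yields
\begin{equation*}
\beta=(\partial_\rho f)\,d\rho=\varepsilon\,\frac{2c}{1-\rho^2}\,d\rho=\varepsilon\,(2c)\,\frac{x^1\,dx^1+x^2\,dx^2}{(1-(x^1)^2-(x^2)^2)\sqrt{(x^1)^2+(x^2)^2}},
\end{equation*}
which is the asserted formula with $b:=2\varepsilon c$. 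The two signs reflect the two orientations of the radial field $\partial_\rho$, producing the metrics $\alpha\pm\beta$; taking the outward one gives $b=2c>0$, and then $\|\beta\|_\alpha=b/2<1$, so $F=\alpha+\beta$ is a genuine (strongly convex) Randers metric on $\mathbb D$, i.e. the Randers Poincar\'e disc.

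The step I expect to be the real obstacle is therefore the radiality $\partial_\theta f\equiv 0$. It does \emph{not} follow from the bare requirement that $\|df\|_\alpha$ be constant: on the hyperbolic plane the functions with constant gradient norm also include all Busemann functions and all signed distance functions to a geodesic, which are smooth, closed and of constant $\alpha$-norm on $\mathbb D$ without being rotationally symmetric; conversely the radial potential $f=\varepsilon c\log\frac{1+\rho}{1-\rho}$ has $\beta=df$ singular at $\rho=0$, so strictly one works on $\mathbb D\setminus\{0\}$ here. The way I would secure radiality is to use the symmetry the model is designed to carry: one requires $F$, like $\alpha$, to be invariant under the rotation group $SO(2)$ about the centre — precisely the feature that makes the metric circles $\rho=\mathrm{const}$ the natural competitors in the isoperimetric problem — so $\beta$ is an $SO(2)$-invariant exact $1$-form; any such form on $\mathbb D\setminus\{0\}$ is $P(\rho)\,d\rho+Q(\rho)\,d\theta$, and exactness forces $0=\oint_{\rho=\mathrm{const}}\beta=2\pi Q$, hence $Q\equiv 0$ and $\beta=P(\rho)\,d\rho=df$ with $f=f(\rho)$. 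Equivalently: $\|\nabla^\alpha f\|_\alpha=\mathrm{const}$ forces the integral curves of $\nabla^\alpha f$ to be $\alpha$-geodesics, and the only $SO(2)$-invariant foliation of the Poincar\'e disc by complete geodesics is the one by diameters, whose orthogonal trajectories are exactly the circles $\rho=\mathrm{const}$. Granting $f=f(\rho)$, the remaining steps — solving $(\partial_\rho f)^2=4c^2/(1-\rho^2)^2$ and rewriting the answer in Cartesian coordinates — are the routine calculations carried out above.
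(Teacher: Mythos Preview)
Your computational route is exactly the paper's: write $\|\beta\|_\alpha^2=a^{ij}\partial_if\,\partial_jf=c^2$, impose $f=f(\rho)$, solve the resulting first–order ODE $(1-\rho^2)\,\partial_\rho f=\pm(\text{const})$, and rewrite $d\rho$ in Cartesian coordinates. The paper does precisely this, introducing the radial ansatz with the bare sentence ``Let us assume $r:=\sqrt{(x^1)^2+(x^2)^2}$ and $f(x^1,x^2):=\phi(r)$'' and proceeding directly to the ODE.

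Where you go further than the paper is in recognising that this radiality is an \emph{assumption}, not a consequence of the hypotheses as written. Your counterexamples are correct: on the Poincar\'e disc the Busemann functions and the signed distances to geodesics are smooth on all of $\mathbb D$, have $\|df\|_\alpha\equiv 1$, and are not $SO(2)$--invariant; conversely the radial potential produces a $\beta$ that is singular at the origin. So the proposition, read literally, overreaches; the paper silently supplies the missing hypothesis through the ansatz $f=\phi(r)$, whereas you make the rotational symmetry explicit and justify it via $SO(2)$--invariance of $F$. That is a genuine improvement in rigour, not a different method. (Your normalisation $b=2c$, giving $\|\beta\|_\alpha=b/2$, is the one consistent with the displayed $\beta$; the paper's own derivation drops a factor of $4$ between the norm identity and the ODE, which is why it reports $b=c$ and $0<b<1$.)
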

\begin{proposition} \label{prop2.0011}
 Let $(\mathbb{D}, F=\alpha+\beta)$ be the Randers Poincar\'e disc as described in the above Proposition. Then the space $(\mathbb{D}, F)$ does not have constant negative flag curvature.
\end{proposition}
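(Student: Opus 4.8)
The plan is to pass to the Zermelo navigation description of $F=\alpha+\beta$ and invoke the Bao--Robles--Shen classification of Randers metrics of constant flag curvature \cite{BRS}. Write $b^2:=\|\beta\|_\alpha^2$, which is a positive constant by Proposition~\ref{prop2.001}, and put $\lambda:=1-b^2$. If $(h,W)$ denotes the navigation data, then in terms of the fundamental tensor $a_{ij}$ of $\alpha$ and the components $b_i$ of $\beta$ one has
$$h_{ij}=\lambda\big(a_{ij}-b_ib_j\big),\qquad W^i=-\frac{a^{ij}b_j}{\lambda},$$
and a direct computation (the standard identity for navigation data) gives $\|W\|_h=\|\beta\|_\alpha=b$, which is constant. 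So it suffices to identify $(h,W)$ explicitly and decide whether $W$ is an infinitesimal homothety of $h$.

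First I would rewrite $\alpha$, $\beta$, and hence $(h,W)$, in polar coordinates $x=(r\cos\theta,r\sin\theta)$. Because the $\beta$ of Proposition~\ref{prop2.001} is radial, it collapses to $\beta=\tfrac{B}{1-r^2}\,dr$, where $B$ is the constant of that proposition (so $b=B/2$ with the present normalization), and one obtains
$$h=\frac{4\lambda^2}{(1-r^2)^2}\,dr^2+\frac{4\lambda\,r^2}{(1-r^2)^2}\,d\theta^2,\qquad W=-\frac{B(1-r^2)}{4\lambda}\,\partial_r.$$
After the substitution $\rho=\lambda\ln\frac{1+r}{1-r}$, which normalizes the radial part, this becomes $h=d\rho^2+f(\rho)^2\,d\theta^2$ with $f(\rho)=\sqrt{\lambda}\,\sinh(\rho/\lambda)$, and $W=-b\,\partial_\rho$. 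Thus $(\mathbb{D},h)$ has constant curvature $-1/\lambda^2$, and $W=-b\,\nabla^h\rho$ is a constant multiple of the gradient of the $h$-distance from the centre.

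Finally, I would observe that such a $W$ is not an infinitesimal homothety of $h$: since $\mathcal{L}_{\partial_\rho}(d\rho^2)=0$ while $\mathcal{L}_{\partial_\rho}\big(f(\rho)^2 d\theta^2\big)=2ff'\,d\theta^2$, we get $\mathcal{L}_W h=-2b\,ff'\,d\theta^2$, which does not vanish identically and is not a multiple of $h$; hence $W$ is neither Killing nor homothetic for $h$. By \cite{BRS}, a Randers metric with navigation data $(h,W)$ has constant flag curvature if and only if $h$ has constant sectional curvature \emph{and} $W$ is an infinitesimal homothety of $h$; moreover, since $\|W\|_h$ is constant here, any such homothety is forced to be a Killing field. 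As $W$ is not Killing, $(\mathbb{D},F)$ cannot have constant flag curvature, and in particular not constant negative flag curvature, which proves the proposition. The delicate points are computing the navigation data and the radial change of variable correctly, and quoting \cite{BRS} in the right form; a more computational alternative would be to expand the Randers flag-curvature formula for $\alpha+\beta$ and exhibit two flags with distinct curvatures, but the navigation route is shorter and explains conceptually why this exact, radial $\beta$ breaks constancy.
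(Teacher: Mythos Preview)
Your argument is correct, but it follows a genuinely different route from the paper. The paper invokes the Yasuda--Shimada characterization (as recorded in \cite{BCZ}) directly in terms of the pair $(\alpha,\beta)$: a Randers metric $\alpha+\beta$ has constant negative flag curvature $-\lambda^2/4$ if and only if $\alpha$ has the same constant curvature, $\beta$ is exact, and $\nabla^{\alpha}_{j} b_i=\lambda(a_{ij}-b_ib_j)$; it then writes down the Christoffel symbols of the Poincar\'e metric and asserts that this covariant--derivative identity fails for the radial $\beta$. You instead pass to the navigation side $(h,W)$ and use the Bao--Robles--Shen classification \cite{BRS}: after the radial change of variable you exhibit $h$ as a warped product of constant curvature $-1/\lambda^2$ and identify $W=-b\,\partial_\rho$ with a constant multiple of the gradient of the $h$--distance from the centre, which is visibly not Killing (and, since $\|W\|_h$ is constant, not homothetic either). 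The two criteria are equivalent---the Yasuda--Shimada PDE on $b_i$ is exactly the homothety condition on $W$ rewritten---but your route is cleaner and more conceptual: the single observation that $\mathcal{L}_{\partial_\rho}(d\rho^2)=0$ while $\mathcal{L}_{\partial_\rho}(f^2\,d\theta^2)\neq 0$ replaces a component-by-component verification of a tensor identity, and it explains \emph{why} an exact radial $\beta$ of constant norm cannot produce constant flag curvature. The paper's route, on the other hand, stays entirely within the $(\alpha,\beta)$ formalism already in use and needs only the reference \cite{BCZ} already in the bibliography.
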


\begin{theorem}\label{thm2.1}
Let $(\mathbb{D}, F=\alpha+\beta)$ be the Randers Poincar\'e disc. Then the circles of the form $\gamma_0=(a\cos t,a\sin t)$, are solutions of the isoperimetric problem, with respect to the volume forms: Busemann-Hausdorff, Holmes-Thompson, Maximum and Minimum.
\end{theorem}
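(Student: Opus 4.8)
\section*{Proof proposal}

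The plan is to reduce the Finslerian isoperimetric problem for $(\mathbb{D},F)$ to the classical isoperimetric problem for the hyperbolic Poincar\'e disc $(\mathbb{D},\alpha)$, and then to confirm by the calculus of variations that the circles $\gamma_0=(a\cos t,a\sin t)$ are genuine solutions. Two facts drive the reduction. First, since $\beta=df$ is exact, for every $C^1$ closed curve $\gamma:[0,\ell]\to\mathbb{D}$ we have $\oint_\gamma\beta=\oint_\gamma df=f(\gamma(\ell))-f(\gamma(0))=0$ irrespective of orientation; hence the Finsler perimeter equals the hyperbolic perimeter, $L_F(\gamma)=\int_0^\ell F(\gamma,\dot\gamma)\,dt=\int_0^\ell\alpha(\gamma,\dot\gamma)\,dt=L_\alpha(\gamma)$. (In particular $\beta(\gamma_0,\dot\gamma_0)\equiv 0$ along $\gamma_0$, so this is immediate there.) Second, because $\|\beta\|_\alpha$ is constant, each of the four volume forms is a fixed positive multiple of the hyperbolic area form $dV_\alpha$: evaluating the indicatrix gives $dV_{BH}=(1-\|\beta\|_\alpha^2)^{3/2}\,dV_\alpha$, and the analogous computations of the relevant fundamental-tensor determinants over the indicatrix give $dV_{HT}=c_{HT}\,dV_\alpha$, $dV_{\max}=c_{\max}\,dV_\alpha$, $dV_{\min}=c_{\min}\,dV_\alpha$ with constants $c_\bullet>0$ depending only on $\|\beta\|_\alpha$. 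Hence for any of these forms the Finsler area of a region $\Omega$ is $A_\bullet(\Omega)=c_\bullet\,A_\alpha(\Omega)$.

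With this in hand, minimizing $L_F$ among simple closed curves enclosing a prescribed Finsler area $V_0$ is exactly minimizing $L_\alpha$ among curves enclosing hyperbolic area $V_0/c_\bullet$. To make the variational picture explicit along the lines indicated by the keywords, I would work with the constrained functional $\mathcal{J}[\gamma]=L_F(\gamma)-\lambda\,A_\bullet(\Omega_\gamma)=L_\alpha(\gamma)-\lambda c_\bullet A_\alpha(\Omega_\gamma)$, represent an admissible curve near $\gamma_0$ as a normal graph $\gamma_u$ with $\int u=0$ (so that the area constraint is preserved to first order), and derive the Euler--Lagrange equation. Its content is that a critical curve has constant geodesic curvature with respect to $\alpha$; since $\gamma_0=(a\cos t,a\sin t)$ is the hyperbolic geodesic circle of hyperbolic radius $2\,\mathrm{arctanh}\,a$ centred at the origin, it has constant hyperbolic geodesic curvature and therefore satisfies the Euler--Lagrange equation for a suitable multiplier $\lambda$.

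It then remains to verify that $\gamma_0$ actually realizes a minimum. I would compute the second variation $\delta^2\mathcal{J}$ at $\gamma_0$ on area-preserving normal variations and reduce it to a one-dimensional Jacobi (Sturm--Liouville) eigenvalue problem along $\gamma_0$; positivity of the resulting quadratic form is equivalent to the non-existence of conjugate points of the corresponding Jacobi operator over one period, which holds for the round hyperbolic circle because, after reparametrization by arclength, that operator has constant coefficients and the constraint $\int u=0$ removes exactly the neutral mode. This shows $\gamma_0$ is a strict local minimizer; combined with the classical isoperimetric inequality on $\mathbb{H}^2$ (to which everything has been reduced), $\gamma_0$ is the global solution for its enclosed area, and this holds simultaneously for the Busemann--Hausdorff, Holmes--Thompson, maximal and minimal volume forms.

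The main obstacle is twofold. The less routine computation is checking that the Holmes--Thompson, maximal and minimal volume forms are indeed constant multiples of $dV_\alpha$: the maximal and minimal volume forms are the least standard of the four, and one must verify that the explicit $x$-dependence in the relevant indicatrix integrals cancels exactly as it does for $dV_{BH}$, leaving only a function of the (constant) quantity $\|\beta\|_\alpha$. The analytically delicate step is the conjugate-point analysis in the second variation: one must confirm that the area constraint kills precisely the destabilizing mode, so that the isoperimetric stability of the round hyperbolic circle is inherited unchanged under the Randers deformation. Once the reduction to $(\mathbb{D},\alpha)$ is secured, the rest is classical --- and the conclusion is notable precisely because, by Proposition \ref{prop2.0011}, $(\mathbb{D},F)$ does not have constant flag curvature.
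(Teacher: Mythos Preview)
Your proposal is correct and rests on the same two reductions as the paper: exactness of $\beta$ makes $L_F=L_\alpha$ on closed curves, and constancy of $\|\beta\|_\alpha$ makes each of the four volume forms a fixed positive multiple of $dV_\alpha$. Where you diverge is in how the resulting (purely hyperbolic) variational problem is handled. The paper does not invoke the classical hyperbolic isoperimetric inequality; instead it applies Hestenes' sufficiency theorem directly to the Lagrangian $h=f+\lambda g$, verifying each condition by explicit computation: the Euler--Lagrange equation fixes $\lambda=-2a(1-b^2)^{3/2}/(1+a^2)<0$, normality is checked, the Weierstrass excess function is computed and shown to be $\le 0$ via Cauchy--Schwarz, the condition $\sum h_{\dot x^i\dot x^j}y^iy^j\le 0$ is checked, and the absence of conjugate points is obtained not through a Sturm--Liouville spectral argument as you propose but from Bolza's criterion that $h_1<0$ along $\gamma_0$ (here $h_1=2\lambda/(a(1-a^2))$). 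Your route is more conceptual and, by appealing to the global isoperimetric inequality on $\mathbb{H}^2$, would in fact give a global statement, whereas the paper's Hestenes argument yields only that $\gamma_0$ is a proper strong local maximum. One remark on your ``main obstacle'': the fact that $dV_{HT}$, $dV_{\max}$, $dV_{\min}$ are constant multiples of $dV_\alpha$ is not something one needs to compute afresh---for Randers metrics the formulas $dV_{HT}=dV_\alpha$, $dV_{\max}=(1+\|\beta\|_\alpha)^{n+1}dV_\alpha$, $dV_{\min}=(1-\|\beta\|_\alpha)^{n+1}dV_\alpha$ are standard (Lemma~\ref{lem2.1}), so this step is immediate.
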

 Thus our result also holds for spaces which may not have constant negative flag curvature. Note that in \textit{contrast} to the Riemannian case, where it is known that the isoperimetric equality holds on a Riemann surface if and only if it is of constant sectional curvature. For details see Osserman's result, that is, corollary of Theorem $6$, page-$9$ of \cite{RO}.
\section{Preliminaries}
Let $ M $ be an $n$-dimensional smooth manifold. $T_{x}M$ denotes the tangent space of $M$
 at $x$. The tangent bundle of $ M $ is the disjoint union of tangent spaces, $ TM:= \sqcup _{x \in M}T_xM $. We denote the elements of $TM$ by $(x,y)$, where $y\in T_{x}M $ and $TM_0:=TM \setminus\left\lbrace 0\right\rbrace $, the slit tangent bundle of $M$. \\
 \begin{definition}
\textnormal{ \cite{SSZ} A Finsler metric on $M$ is a function $F:TM \to [0,\infty)$ satisfying the following conditions:
 \begin{itemize}
 \item[(i)] $F$ is smooth on $TM_{0}$,
  \item[(ii)] $F$ is positively 1-homogeneous on the fibers of the tangent bundle $TM$,
   \item[(iii)] The Hessian of $\displaystyle{\frac{F^2}{2}}$ with element $\displaystyle{g_{ij}=\frac{1}{2}\frac{\partial ^2F^2}{\partial y^i \partial y^j}}$ is positive definite on $TM_0$, where $y=y^i\frac{\partial}{\partial x^i}$.
 \end{itemize}
 The pair $(M,F)$ is called a Finsler space and $g_{ij}$ is called the fundamental tensor.}
 \end{definition}
 \begin{definition}\textnormal{ \cite{SSZ}
Let $\gamma:[a,b]\to M$ be a piecewise smooth curve on the Finsler manifold $(M,F)$. Then the {\it length} of the curve $\gamma$ is given by 
 \begin{equation}\label{eqn2.1}
     \int_a^b F(\gamma(t),\dot{\gamma}(t))dt.
 \end{equation}}
 \end{definition}
  \vspace{0.1in}
  
 It is known that, there is a canonical volume form in a Riemannian manifold $(M^n,\alpha),~\alpha=\sqrt{a_{ij}dx^idx^j}$, given by $dV=\sqrt{\det(a_{ij})}dx$, whereas in the Finsler manifold there are several volume forms, some of them are defined below:
  \begin{definition}
\textnormal{\cite{SSZ, BYW} For a Finsler manifold $(M^n,F)$, the {\it Busemann-Hausdorff} volume form is defined as $dV_{BH}=\sigma_{BH}(x)dx$, where
     \begin{equation}\label{2.1.76}
     \sigma_{BH}(x)=\frac{vol(B^n(1))}{vol\left\lbrace (y^i)\in T_xM : F(x,y)< 1 \right\rbrace }.
     \end{equation}
   The {\it Holmes-Thompson} volume form is defined as $dV_{HT}=\sigma_{HT}(x)~dx$, where 
     \begin{equation}\label{2.02}
     \sigma_{HT}(x)=\frac{1}{vol(B^n(1))}\int_{F(x,y)<1}\det(g_{ij}(x,y))dy.
     \end{equation}
     Here $B^n(1)$ is the Euclidean unit ball in $\mathbb{R}^n$ and $vol$ is the Euclidean volume.}\\
     \textnormal{The {\it maximum} volume form $dV_{max}$ and the {\it minimum} volume form $dV_{min}$ of a Finsler metric $F$ is defined as,
 \begin{equation}
 dV_{\max}=\sigma_{\max}(x)dx, \quad  dV_{\min}=\sigma_{\min}(x)dx,
 \end{equation}
 where, $\sigma_{\max}(x)=\max\limits_{y \in I_x}\sqrt{\det(g_{ij}(x,y))}$ \quad and \quad $\sigma_{\min}(x)=\min\limits_{y\in I_x}\sqrt{\det(g_{ij}(x,y))}$;  $I_x=\left\lbrace y\in T_xM| F(x,y)=1\right\rbrace$ is the Indicatrix of the Finsler metric $F$ at the point $x$.}
 \end{definition}  
\begin{lemma}\label{lem2.1}\textnormal{\cite{SSZ, BYW}}
    The Busemann-Hausdorff volume form of a Randers metric $F=\alpha+\beta$ is given by,
         \begin{equation}
         dV_{BH}=(1-\|\beta\|^2_{\alpha})^{\frac{n+1}{2}}dV_{\alpha}
         \end{equation}
          and the Holmes-Thompson volume form is given by,
         
         \begin{equation}
         dV_{HT}=dV_{\alpha}.
         \end{equation}
         The maximum and the minimum volume forms of a Randers metric $F=\alpha+\beta$ are, given by,
       \begin{equation}
       dV_{\max}=(1+\|\beta\|_{\alpha})^{n+1}dV_{\alpha}, \quad
       dV_{\min}=(1-\|\beta\|_{\alpha})^{n+1}dV_{\alpha}.
       \end{equation}
    \end{lemma}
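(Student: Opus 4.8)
The plan is to reduce all four computations to one algebraic fact about the fundamental tensor of a Randers metric, namely the determinant identity $\det(g_{ij}) = (F/\alpha)^{n+1}\det(a_{ij})$, and then to evaluate each volume density by passing to a linear coordinate system in which $\alpha$ becomes the Euclidean norm. Throughout write $b:=\|\beta\|_\alpha$, $\alpha=\sqrt{a_{ij}y^iy^j}$ and $\beta=b_iy^i$, with $y_i=a_{ij}y^j$. First I would establish the identity: differentiating $F^2=\alpha^2+2\alpha\beta+\beta^2$ twice yields $g_{ij}=\tfrac{F}{\alpha}\bigl(a_{ij}-\tfrac{y_iy_j}{\alpha^2}\bigr)+\bigl(\tfrac{y_i}{\alpha}+b_i\bigr)\bigl(\tfrac{y_j}{\alpha}+b_j\bigr)$, which is a rank-two update of $\tfrac{F}{\alpha}a_{ij}$. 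Applying the matrix determinant lemma (equivalently the Sherman--Morrison formula twice, using $a^{ij}y_iy_j=\alpha^2$ and $a^{ij}y_ib_j=\beta$) collapses the resulting $2\times2$ determinant to $F/\alpha$ and gives the claimed identity.

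For the Busemann--Hausdorff form I would compute the Euclidean volume of the Randers unit ball $U_x=\{y:\alpha(y)+\beta(y)<1\}$ appearing in (\ref{2.1.76}). Squaring $\alpha<1-\beta$ shows $U_x=\{(a_{ij}-b_ib_j)y^iy^j+2b_iy^i<1\}$, an ellipsoid; completing the square recentres it, and its Euclidean volume equals $\mathrm{vol}(B^n(1))\,(\det Q_{ij})^{-1/2}\,(1+b_iQ^{ij}b_j)^{n/2}$, where $Q_{ij}=a_{ij}-b_ib_j$. The matrix determinant lemma gives $\det Q_{ij}=\det(a_{ij})(1-b^2)$ and Sherman--Morrison gives $1+b_iQ^{ij}b_j=(1-b^2)^{-1}$, so that $\mathrm{vol}(U_x)=\mathrm{vol}(B^n(1))(\det a_{ij})^{-1/2}(1-b^2)^{-(n+1)/2}$. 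Taking the reciprocal ratio in (\ref{2.1.76}) produces $\sigma_{BH}=(1-b^2)^{(n+1)/2}\sqrt{\det a_{ij}}$, that is $dV_{BH}=(1-b^2)^{(n+1)/2}dV_\alpha$.

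For the Holmes--Thompson form I would insert the determinant identity into (\ref{2.02}), reducing the integrand to $\det(a_{ij})(F/\alpha)^{n+1}$, and then normalise linearly so that $\alpha=|z|$ and $\beta(z)=\tilde b\cdot z$ with $|\tilde b|=b$; the Jacobian of this change contributes exactly $(\det a_{ij})^{-1/2}$, leaving a single factor $\sqrt{\det a_{ij}}$. Passing to polar coordinates $z=ru$ with $|u|=1$, the constraint $F<1$ becomes $r<1/(1+\tilde b\cdot u)$ while the weight is $(1+\tilde b\cdot u)^{n+1}$; the radial integral $\int_0^{1/(1+\tilde b\cdot u)}r^{\,n-1}\,dr$ cancels all but one power, so the whole expression reduces to $\tfrac1n\int_{S^{n-1}}(1+\tilde b\cdot u)\,d\sigma(u)$. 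The linear term integrates to zero by oddness, and the constant term contributes $\mathrm{vol}(S^{n-1})/n=\mathrm{vol}(B^n(1))$, cancelling the prefactor in (\ref{2.02}) and leaving $\sigma_{HT}=\sqrt{\det a_{ij}}$, i.e.\ $dV_{HT}=dV_\alpha$.

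Finally, for the maximal and minimal forms I would use that $g_{ij}(x,y)$ is $0$-homogeneous in $y$, so that $\sqrt{\det g_{ij}}$ equals $\sqrt{\det a_{ij}}$ times an appropriate power of the purely directional quantity $F/\alpha=1+\beta/\alpha$. This ratio attains its extreme values precisely where $\beta/\alpha$ does, and by the Cauchy--Schwarz inequality for the inner product $a_{ij}$ one has $|\beta|\le\|\beta\|_\alpha\,\alpha$ with equality in suitable directions, so $\beta/\alpha$ ranges over $[-\|\beta\|_\alpha,\|\beta\|_\alpha]$ and $F/\alpha$ over $[1-\|\beta\|_\alpha,\,1+\|\beta\|_\alpha]$. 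Substituting the larger endpoint into $\sigma_{\max}=\max_{y\in I_x}\sqrt{\det g_{ij}}$ and the smaller into $\sigma_{\min}$ then recovers $dV_{\max}$ and $dV_{\min}$ as the stated multiples of $dV_\alpha$. I expect the Holmes--Thompson step to be the main obstacle: everything there hinges on showing that the direction-dependent weight $(F/\alpha)^{n+1}$ integrates, after the radial collapse, to a quantity independent of $\beta$, and the cleanest route to this cancellation is the polar-coordinate reduction above rather than a direct attack on the ellipsoidal domain.
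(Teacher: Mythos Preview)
Your argument is correct in every part: the rank-two update formula for $g_{ij}$, the matrix determinant lemma yielding $\det(g_{ij})=(F/\alpha)^{n+1}\det(a_{ij})$, the ellipsoid computation for $\sigma_{BH}$, the polar-coordinate collapse for $\sigma_{HT}$, and the Cauchy--Schwarz bound for $\sigma_{\max}$ and $\sigma_{\min}$ are all standard and accurately carried out. In particular the Holmes--Thompson step, which you flag as the delicate one, works exactly as you describe: the radial integral contributes $n^{-1}(1+\tilde b\cdot u)^{-n}$, leaving a single factor $1+\tilde b\cdot u$ whose odd part vanishes on the sphere.

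The paper, however, does not prove this lemma at all: it is stated with citations to \cite{SSZ} and \cite{BYW} and used as a black box. So there is nothing to compare against; you have supplied a self-contained proof where the paper simply imports the result from the literature. What you have written is essentially the argument one finds in those references (the determinant identity and the Busemann--Hausdorff case appear in Bao--Chern--Shen, the extremal volume forms in Wu), so your route is not novel, but it is complete and would serve perfectly well if the paper wished to be self-contained on this point.
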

 \section{The sufficient conditions for the solvability of the isoperimetric problem in Randers  Poincar\'e disc}
  In this section, we discuss about the sufficiency condition for the solvability of the isoperimetric problem essentially due to Hesteens \cite{MH} in a slightly modified form. Towards this, we shall focus on the calculus of variations in a parameteric form in $2$-dimensional Euclidean space.
 First we recall some definitions. More details can be found in \cite{OB, MH, LZ1,  LZ2}.\\
  
 \textbf{Admissible curve:} A continuous curve $\gamma$, defined on $\left[t_0,t_1 \right]$, in a manifold is called an {\it admissible curve}, if there exists a partition $ P := \left\lbrace t_0=a_{0}<a_{1}<...<a_{k}=t_1\right\rbrace$ of the interval $[t_0,t_1]$ such that $\gamma $ has continuous derivative in each subinterval $\left[ a_{i},a_{i+1} \right]$, for all $i = 0,1,...,k-1 $.\\
  
 \textbf{The Isoperimetric problem in Randers Poincare disc:}  
 We discuss the isoperimetric problem in Randers Poincare disc $\mathbb{D}$, which is an open unit disc in $\mathbb{R}^2$ equipped with Randers metric.  Let  the parametric form  of an arc in $\mathbb{D}$ is
$\gamma(t) := (x^1(t), x^2(t))$, where the functions $x^{i}(t)$ are assumed to be of class $C^2$ and 
  $({\dot{x}^{1}(t)})^2 + ({\dot{x}^{2}(t)})^2\neq 0$ for any $t\in [t_{0}, t_{1}]$.\\
  
  An integral
  $$ \int_{t_{0}}^{t_1}  g(t, \gamma(t), \dot{\gamma}(t)) \; dt,$$
 can be shown to be independent of the parameterization  of the 
 arc $\gamma$ if and only if the integrand $f$ is  positively homogeneous in $\dot{\gamma}$ of degree one. \\ \\
 Consider the Poincar\'e disc $({\mathbb{D}},F)$ with some volume form $dV_{\mu}$. The  isoperimetric 
 problem under consideration can be stated as follows:
 \begin{equation*} 
 \begin{split} 
 \text{To find a curve in $(\mathbb{D},F)$ which maximizes}~ \{A_{\mu}(D)\vert ~ D ~ \mbox{is the domain enclosed by an  }\\ \text{arbitrary simple closed piecewise smooth curve under the constrained that  }\\ \text{ the curve's length} ~ L=l\}.\hspace{7.5cm}
 \end{split} 
 \end{equation*} 
Here $A_{\mu}$ denotes the area with respect to the volume form $dV_{\mu}$ and $L$ denotes the length of the curve. \\
 
 Thus we consider  the set of all admissible curves joining two fixed points for which the integral $L = \int\limits_{t_0}^{t_1}g(t, \gamma(t),\dot{\gamma}(t))dt$ takes a constant value $l$. To solve the problem, we look for the admissible curve $\gamma(t)=(x^1(t),x^2(t))$ in $\mathbb{D}$ for which the integral $A=\int\limits_{t_0}^{t_1}f(t, \gamma(t),\dot{\gamma}(t))dt$ has its \textit{extremum}.\\
 
\textbf{Lagrange functional:} To solve the isoperimetric problem
in $\mathbb{D}$, we first consider the Lagrange functional:
 \begin{equation}\label{eq2.01}
 J(\gamma)=A+\lambda L=\int_{t_0}^{t_1}h(t,\gamma, \dot{\gamma};\lambda)dt,
 \end{equation}
where, 
\begin{equation}\label{2.03}
h=f+\lambda g,
\end{equation}
and $\lambda$ is a Lagrange multiplier. \\

\textbf{Isoperimetric extremal:} An admissible curve $\gamma(t)$ is said to be an {\it isoperimetric extremal}, if it satisfies the following Euler-Lagrange equations:
\begin{equation}\label{eq2.0.1}
\frac{\partial h}{\partial x^i}- \frac{d}{dt}\left(\frac{\partial h}{\partial \dot{x}^i} \right) =0, ~ i=1,2. 
\end{equation}

\vspace{0.1in}

\textbf{Normality of an admissible curve:} An admissible curve $\gamma(t)$ satisfying the Euler-Lagrange equations is said to be \textit{normal}, if $P_1$ and $P_2$ are never zero functions, where
\begin{equation}\label{2.1.64}
 P_i=g_{x^i}-\frac{d}{dt}g_{\dot{x}^i}, \quad i=1,2.   
\end{equation}

\textbf{Weierstrass function:} 
The {\it Weierstrass} function $E$, for the function $h$ defined by
(\ref{2.03}), is given by
\begin{equation}\label{2.25}
E(t, \gamma,\dot{\gamma}, u)=h(t, \gamma,u)-h(t,\gamma, \dot{\gamma})-\sum\limits_{j=1}^{2}(u^{j}-\dot{x}^{j})\frac{\partial h(t, \gamma, \dot{\gamma})}{\partial \dot{x}^j},
\end{equation}
where $h(t, \gamma, \dot{\gamma})=f(t, \gamma, \dot{\gamma})+\lambda g(t, \gamma, \dot{\gamma})$.\\

\textbf{Second variation of $\boldmath{J}\unboldmath$ along a curve:} The \textit{second variation} of $J$ along the curve $\gamma_0$ is of the form
\begin{equation}
J''(\gamma_0,y)=\int\limits_{t_0}^{t_1}2\omega(t,y(t),\dot{y}(t))dt,
\end{equation}
where,  $2\omega(t,y(t),\dot{y}(t)) =\sum\limits_{i,j=1}^{2} h_{x^ix^j}y^iy^j+2h_{x^i\dot{x}^j}y^i\dot{y}^j+h_{\dot{x}^i\dot{x}^j}\dot{y}^i\dot{y}^j$. \\ 
With this background a sufficiency theorem proved by 
Hestenes \cite{MH}, for a strong maximum of the isoperimetric problem, with slight modification can be stated as follows.
\begin{theorem}
[{\bf Sufficiency Theorem}
\textnormal{\cite{LZ1}}\label{thm2.2}]
Let $\gamma_0$ be an admissible curve. Suppose there exist $\lambda_0$ such that, relative to the function
$$ J(\gamma) = \int\limits_{t_0}^{t_1} h(t,\gamma, \dot{\gamma};\lambda)dt,$$
(i) $\gamma_0$ is isoperimetric extremal, that is, it satisfies the Euler-Lagrange equations \eqref{eq2.0.1};\\
(ii) $\gamma_0$ is normal, that is, \eqref{2.1.64} is satisfied;\\
(iii) the Weierstrass function $E(\gamma, \dot{\gamma} , u) < 0$ for every admissible set $(x^i, u^i)$ with $u \ne k\dot{\gamma} (k > 0)$;\\
(iv) $J''(\gamma_0, y)< 0$, for every non-null admissible variations $y^i(t)\ne \rho(t)\dot{x}^i(t) (t_0 \le t \le t_1)$, vanishing at $t = t_0$ and $t = t_1$ and satisfying with $\gamma_0$ the equations, 
$$ \int\limits_{t_0}^{t_1}(g_{x^i}y^i+g_{\dot{x}^i}\dot{y}^i)dt= 0, \quad i = 1, 2;$$
(v) along $\gamma_0$ the inequality $\sum\limits_{i,j=1}^{2}h_{\dot{x}^i\dot{x}^j}y^iy^j<0$ holds for all $y\ne k\dot{\gamma_0}(t)$.\\
 Then $\gamma_0$ is a proper strong maximum of the isoperimetric problem.
\end{theorem}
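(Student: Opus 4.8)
The plan is to establish the maximum by the classical Weierstrass field-theory method, adapted to the parametric isoperimetric problem as in Hestenes. Because the augmented functional $J=A+\lambda L$ agrees with $A$ up to the fixed term $\lambda l$ on every admissible comparison curve sharing the endpoints of $\gamma_0$ and satisfying $L=l$, it suffices to prove the strict inequality $J(\gamma)<J(\gamma_0)$ for every such $\gamma\neq\gamma_0$; subtracting the equal terms $\lambda L=\lambda l$ then yields $A(\gamma)<A(\gamma_0)$, which is exactly the assertion that $\gamma_0$ is a proper strong maximum. Thus the whole argument can be phrased for the single functional $J$, with the isoperimetric side-condition surfacing only through the multiplier $\lambda_0$ furnished by hypotheses (i) and (ii).

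First I would embed $\gamma_0$ in a field of extremals of $J$. Condition (v) plays the role of the strengthened Legendre--Clebsch condition in its parametric form: since $h$ is positively homogeneous of degree one in $\dot\gamma$, Euler's identity forces $\sum_j h_{\dot x^i\dot x^j}\dot x^j=0$, so the Hessian $(h_{\dot x^i\dot x^j})$ is necessarily singular along the direction $\dot\gamma_0$, and hypothesis (v) asserts that it is negative definite on the complementary directions $y\neq k\dot\gamma_0$, the strongest definiteness available for a parametric problem. Next I would use hypothesis (iv), the negative definiteness of the second variation $J''(\gamma_0,y)$ over all admissible constrained variations, together with the normality from (ii), to conclude that the accessory (Jacobi) problem has no point conjugate to $t_0$ on $(t_0,t_1]$. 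Normality is precisely what renders the accessory isoperimetric system non-degenerate, so that $\lambda_0$ is uniquely determined and the conjugate-point analysis is legitimate. The strengthened Legendre and Jacobi conditions then permit the construction of a Mayer field of extremals of $J$ covering a neighborhood of $\gamma_0$, with a well-defined slope field $p$.

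On this field I would form the Hilbert invariant integral $I^{*}$, which is path-independent and coincides with $J$ along each field extremal, in particular along $\gamma_0$. For an arbitrary admissible comparison curve $\gamma$ with the endpoints of $\gamma_0$, the fundamental Weierstrass formula gives
\begin{equation*}
J(\gamma)-J(\gamma_0)=J(\gamma)-I^{*}(\gamma)=\int_{t_0}^{t_1}E\bigl(t,\gamma(t),p(\gamma(t)),\dot\gamma(t)\bigr)\,dt,
\end{equation*}
where $E$ is the Weierstrass excess function of \eqref{2.25}. Hypothesis (iii), namely $E<0$ whenever $u\neq k\dot\gamma$ with $k>0$, forces the integrand to be strictly negative wherever $\dot\gamma$ is not a positive multiple of the field slope; since a curve $\gamma\neq\gamma_0$ must differ in direction from the field on a set of positive measure, the integral is strictly negative. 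Hence $J(\gamma)<J(\gamma_0)$, and by the reduction above $A(\gamma)<A(\gamma_0)$, establishing the proper strong maximum.

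I expect the main obstacle to be the field-embedding step in the constrained setting rather than the excess-function estimate. Building a Mayer field for $J=A+\lambda L$ while keeping the comparison class confined to curves with $L=l$ is the delicate point: the field must be generated by the accessory isoperimetric system, and it is the normality hypothesis (ii) that rescues non-degeneracy and pins down $\lambda_0$. A second technical issue, running throughout, is the parametric degeneracy of $(h_{\dot x^i\dot x^j})$ noted above; it is exactly why every hypothesis carries the qualifier $u\neq k\dot\gamma$ or $y\neq k\dot\gamma_0$, and it requires the field theory to be carried out in terms of line elements (directions) so that the unavoidable null direction $\dot\gamma_0$ does not obstruct the construction.
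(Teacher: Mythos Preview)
The paper does not supply a proof of this theorem at all: it is quoted as a known sufficiency criterion, attributed to Hestenes \cite{MH} in the slightly modified form of \cite{LZ1}, and is then \emph{applied} in Section~4 by verifying hypotheses (i)--(v) for the circles $\gamma_0$. So there is nothing to compare your argument against in this paper.

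That said, your outline is the classical Hestenes field-theory proof and is essentially correct in structure: reduce to $J$ via the multiplier, use (v) as the strengthened parametric Legendre condition and (iv) together with normality (ii) to exclude conjugate points, embed $\gamma_0$ in a Mayer field, and conclude via the Weierstrass formula and (iii). Two small comments. First, your claim that ``a curve $\gamma\neq\gamma_0$ must differ in direction from the field on a set of positive measure'' deserves a line of justification: if $\dot\gamma(t)$ were a positive multiple of the field slope $p(\gamma(t))$ almost everywhere, then $\gamma$ would be a reparametrization of a field extremal through its initial point, hence a reparametrization of $\gamma_0$ itself; this is where the parametric invariance is actually used. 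Second, the ``strong'' in ``strong maximum'' means comparison curves merely $C^0$-close to $\gamma_0$, not $C^1$-close; your field construction already handles this because the Weierstrass condition (iii) is assumed for \emph{all} admissible directions $u$, not just those near $\dot\gamma_0$, but it is worth flagging since this is exactly why hypothesis (iii) is stated globally in $u$.
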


\textbf{Conjugate point along a curve:} A point $\gamma_0(t)$, $t_0 <t \le t_1$, is said to be a \textit{conjugate point} to $\gamma_0(t_0)$ on $\gamma_0$, if there exist a solution $y^i(t)$ of the Jacobi equations
\begin{equation*}
\omega_{y^i}-\frac{d\omega_{\dot{y}^i}}{dt}=0, \quad i=1,2,
\end{equation*}
such that $\int\limits_{t_0}^{t_1}\left( (g)_{x^i}y^i + (g)_{\dot{x}^i}\dot{y}^i\right) dt = 0$.\\

\textbf{Characterizations of conjugate point in terms of Jacobi equations:}
The {\it Jacobi equation} along an isoperimetric extremal admissible curve $\gamma_0$ turns out to be:
\begin{equation}\label{2.1.71}
\Psi(y)+\mu U =0,
\end{equation}
where, 
\begin{equation}\label{2.1.73}
\Psi (y)=h_{2}y-\frac{d}{dt}(h_{1}y'),
\end{equation}
and,
\begin{equation}\label{2.1.70}
\begin{split}
h_{1}:=\frac{h_{\dot{x}^1\dot{x}^{1}}}{(\dot{x}^{2})^{2}},\;\;
h_{2}:=\frac{1}{(\dot{x}^{2})^{2}}\left( h_{x^{1}x^{1}}-(\ddot{x}^{2})^{2}h_{1}-\frac{dK}{dt}\right),\\
 K :=h_{x^{1}\dot{x}^{1}}-\dot{x}^{2} \ddot{x}^{2}h_{1}, \;\;
  U :=g_{x^{1}\dot{x}^{2}}- g_{\dot{x}^{1}x^{2}}-g_{\dot{x}^{1}\dot{x}^{1}}\frac{d}{dt}\left( \frac{\dot{x}^{1}}{\dot{x}^{2}}\right).\\
  \end{split}
\end{equation}
Moreover, the solution $y(t)$ also satisfies, 
\begin{equation}\label{2.1.65}
    \int\limits_{a}^{b}Uydt=0.
\end{equation}

\vspace{0.1in}
Thus, equivalently, if $\gamma_0(c)$ is a conjugate point to the point $\gamma_0(a)$ along the curve $\gamma_0$, then the solution $y(t)$ of the above Jacobi equation with the integral condition (\ref{2.1.65})  must also satisfy the following two conditons:\\
\begin{equation}\label{2.1.66}
y(a) = y(c) = 0,   \quad y(t)\ne 0 \quad \textnormal{on} \quad a < t < c.
\end{equation}

\vspace{0.2in}
Let $y(t)=c_1\theta_1(t)+c_2\theta_2(t)+\mu\theta_3(t)$ be a solution of \eqref{2.1.73}. Then from \eqref{2.1.65} and \eqref{2.1.66} we have
\begin{equation}
    y(a)= c_1\theta_1(a)+c_2\theta_2(a)+\mu\theta_3(a)=0,
\end{equation}
\begin{equation}
    y(c)= c_1\theta_1(c)+c_2\theta_2(c)+\mu\theta_3(c)=0,
\end{equation}
and
\begin{equation}
  \int\limits_{a}^{c}Uydt=c_1\int\limits_{a}^{c}U\theta_1dt+c_2\int\limits_{a}^{c}U\theta_2dt+\mu\int\limits_{a}^{c}U\theta_3dt=0.
\end{equation}
Therefore, if for all admissible curve $y$, $J''\ne 0$, it is necessary that
 \begin{equation}\label{2.1.03}
 D(a,c) =
 \begin{vmatrix}
     \theta_1(a) & \theta_2(a) & \theta_{3}(a)\\ 
     \theta_1(c) & \theta_{2}(c) & \theta_{3}(c)\\
     \int\limits_{a}^{c}U \theta_1dt & \int\limits_{a}^{c}U \theta_2dt & \int\limits_{a}^{c}U\theta_3dt
\end{vmatrix}\ne 0 .   
 \end{equation}
 \textbf{The function $h_1$:}\\
From the Bolza book \cite{OB}, page 119, $h(x^1,x^2,y^1,y^2)$ defined by \eqref{2.03} is positively homogeneous of degree one with respect to $y^1,y^2$. Hence, by the Euler's theorem for homogeneous polynomials of two variables we get,
 \begin{equation}\label{2.3.1}
     y^1h_{y^1}+y^2h_{y^2}=h.
 \end{equation}
 Differentiating \eqref{2.3.1} with respect to $x^1$ and $x^2$ we obtain,
 \begin{equation}
     h_{x^1}=y^1h_{y^1x^1}+y^2h_{y^2x^1}, \quad h_{x^2}=y^1h_{y^1x^2}+y^2h_{y^2x^2}.
 \end{equation}
 And differentiating \eqref{2.3.1} with respect to $y^1$ and $y^2$ we obtain,
 \begin{equation}
    y^1h_{y^1y^1}+y^2h_{y^2y^1}=0, \quad y^1h_{y^1y^2}+y'h_{y^2y^2}=0.
 \end{equation}
 Hence, if $y^1$ and $y^2$ are not both zero,
 \begin{equation}
     h_{y^1y^1}: h_{y^1y^2}:h_{y^2y^2}=(y^2)^2:-y^1y^2:(y^1)^2.
 \end{equation}
 Therefore, there exist a function $h_1$ of $x^1,x^2,y^1,y^2$ such that,
 \begin{equation}\label{eq2.10}
  h_{y^1y^1}=(y^2)^2h_1, \quad h_{y^1y^2}=- y^1y^2h_1, \quad  h_{y^2y^2}=(y^1)^2h_1.
 \end{equation}
 The function $h_1$  thus defined is $C^1$ in the domain of definition of $h$. See pages 120-121 of \cite{OB} for more details.
 \begin{proposition}\label{prop2.4}
 There is no point which is conjugate to $\gamma_0(a), t\in[a,c]$ along the curve $\gamma_0$, if $h_1<0 (>0)$, along $\gamma_0$.
 \end{proposition}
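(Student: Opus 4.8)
The plan is to show that, along $\gamma_0$, the \emph{reduced second variation} is sign-definite, and to deduce the absence of conjugate points from this, following Bolza's treatment of the Jacobi condition adapted to the isoperimetric setting. Integrating $2\omega$ by parts and using the homogeneity identities \eqref{eq2.10} together with the definitions in \eqref{2.1.70}, one writes the second variation of $J$ along $\gamma_0$, restricted to admissible variations, in the scalar form
\begin{equation*}
J''(\gamma_0, w) = \int_a^c \big( h_1 (w')^2 + h_2 w^2 \big)\, dt ,
\end{equation*}
where $w$ is the scalar function attached to the variation $y=(y^1,y^2)$, the admissible class consists of those $w$ with $w(a)=w(c)=0$ and $\int_a^c U w\, dt = 0$, and the isoperimetric constraint has absorbed the term in $U$. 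By the discussion preceding \eqref{2.1.03}, $\gamma_0(c)$ fails to be conjugate to $\gamma_0(a)$ on $[a,c]$ exactly when this form has no non-trivial null direction in the admissible class, i.e.\ when $D(a,c)\neq 0$; so it suffices to prove that the constant sign of $h_1$ forces $J''(\gamma_0,\cdot)$ to be negative definite (resp.\ positive definite) there.

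For this I would use Bolza's completion-of-squares device. Suppose $u(t)$ is a solution of the homogeneous Jacobi equation $\Psi(u)=0$ having no zero on $[a,c]$. Then, integrating by parts and using $\Psi(u)=0$ together with $w(a)=w(c)=0$, one obtains the identity
\begin{equation*}
\int_a^c \big( h_1 (w')^2 + h_2 w^2 \big)\, dt = \int_a^c h_1 \Big( w' - \frac{u'}{u}\, w \Big)^{2} dt
\end{equation*}
for every admissible $w$. The right-hand side has the sign of $h_1$, and it vanishes only when $w' = (u'/u)\,w$, i.e.\ $w = C u$; since $u$ has no zero while $w(a)=0$, this forces $C=0$ and $w\equiv 0$. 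Hence, once a zero-free $u$ exists, $J''(\gamma_0,\cdot)$ is definite of the sign of $h_1$ on the admissible class, so no point of $[a,c]$ is conjugate to $\gamma_0(a)$.

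What remains, and what I expect to be the crux, is to exhibit the zero-free solution $u$, i.e.\ to establish that the Jacobi equation is \emph{non-oscillatory} on $[a,c]$. The hypothesis $h_1\neq 0$ is used first to turn $-(h_1 u')'+h_2 u=0$ into a regular Sturm--Liouville equation, so that Sturm comparison applies; one then has to rule out an interior zero of the relevant solution, and in the setting of the paper this is where the explicit extremal $\gamma_0$ and the resulting behaviour of $h_1$ and $h_2$ along it enter. Equivalently, one may argue by contradiction: a conjugate point $c'\in(a,c]$ would yield, after extending the Jacobi field by zero past $c'$, a non-null admissible variation with $J''(\gamma_0,\cdot)=0$ that has a genuine corner at $c'$ (genuine because $h_1\neq 0$ forces the Jacobi field to have non-zero derivative at $c'^{-}$); Hestenes' secondary-variation/corner construction would then push $J''$ strictly off $0$ against the sign of $h_1$, contradicting the definiteness established above. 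The case $h_1>0$ is identical with all inequalities reversed.
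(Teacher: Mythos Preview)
Your argument has a genuine gap at precisely the point you flag as ``the crux.'' The completion-of-squares identity
\[
\int_a^c \big( h_1 (w')^2 + h_2 w^2 \big)\, dt \;=\; \int_a^c h_1 \Big( w' - \tfrac{u'}{u}\, w \Big)^{2} dt
\]
is correct and does give definiteness of the sign of $h_1$, \emph{but only once a zero-free solution $u$ of $\Psi(u)=0$ on $[a,c]$ is in hand}. For a second-order Sturm--Liouville equation, the existence of such a $u$ is essentially equivalent to the disconjugacy you are trying to prove; it is not a consequence of the sign of $h_1$ alone. Indeed, in the unconstrained problem the analogue of your claim is false (think of geodesics on the round sphere: $h_1>0$ everywhere, yet conjugate points abound). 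So any valid proof here must exploit the isoperimetric structure---the inhomogeneous term $\mu U$ in the Jacobi equation \eqref{2.1.71} and the side condition \eqref{2.1.65}---in an essential way. Your sketch never does; the constraint is mentioned in the setup but plays no role in either of your proposed resolutions.

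Your alternative contradiction route is circular. Extending a Jacobi field by zero past a putative conjugate point $c'$ does produce an admissible variation with $J''=0$ and a genuine corner, and a corner-rounding construction will move $J''$ off zero in the direction opposite to the sign of $h_1$. But that only yields a contradiction if you already know $J''$ is definite of the sign of $h_1$---which is exactly the conclusion of the completion-of-squares step, which in turn needed the zero-free $u$. The two routes lean on each other and neither stands alone. Invoking ``the explicit extremal $\gamma_0$ and the resulting behaviour of $h_1$ and $h_2$'' would at best prove the result for the specific circles of Section~4, not the general criterion stated here, and would in any case use $h_2$, whereas the proposition hypothesises only the sign of $h_1$.

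The paper's own argument is quite different in spirit: it does not pass through definiteness of $J''$ at all, but works directly with the $3\times 3$ determinant $D(a,c)$ of \eqref{2.1.03}, invoking Bolza's formulae (equation~(44), items~(II$'$),~(III$'$) on p.~242, and equation~(75) of \cite{OB}) for the isoperimetric conjugate-point determinant to conclude $D(a,c)\neq 0$ whenever $h_1$ keeps a constant sign. That is, the isoperimetric structure enters through Bolza's explicit determinant identities rather than through a second-variation argument. If you want to salvage your approach, you would need to locate and use those identities (or their equivalent) to produce the missing non-oscillation input; the sign of $h_1$ by itself is not enough.
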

  \begin{proof}
   If along $\gamma_0$, $h_1<0$ and $a<c$, then from equation $(44)$, [$(II'), (III')$ of page $242$] and (75) of  \cite{OB}, it follows that 
  for $a\le t \le c$,  $D(a,c) \neq 0$.
  Therefore, from \eqref{2.1.03} it follows that there are no conjugate points along $\gamma_{0}$.\\
  Similarly, we can show that if $h_1>0$, $\gamma_o(a)$   does not have a conjugate point along $\gamma_0$
 \end{proof}
\begin{theorem}[\cite{LZ1}, \cite{LZ2}]
The inequality $J''(\gamma_0, y) <0$ (or, $>0$) holds for all $y \ne 0$ if and only if there is no point $\gamma_0(c)$ conjugate to $\gamma_0(a)$ along  $\gamma_0$.
\end{theorem}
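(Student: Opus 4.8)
The plan is to run the classical Jacobi-type argument of the parametric calculus of variations, adapted to the isoperimetric (constrained) setting exactly as in \cite{OB,MH}. Throughout, $y$ will denote a non-null admissible variation vanishing at $t_0,t_1$ and satisfying the linearised constraint $\int_{t_0}^{t_1}(g_{x^i}y^i+g_{\dot{x}^i}\dot{y}^i)\,dt=0$. I will use repeatedly that $2\omega(t,y,\dot{y})$ is a quadratic form in $(y^1,y^2,\dot{y}^1,\dot{y}^2)$, so Euler's identity gives $2\omega=\sum_i(y^i\omega_{y^i}+\dot{y}^i\omega_{\dot{y}^i})$ and, after one integration by parts,
\begin{equation*}
J''(\gamma_0,y)=\int_{t_0}^{t_1}\sum_i y^i\Bigl(\omega_{y^i}-\tfrac{d}{dt}\omega_{\dot{y}^i}\Bigr)\,dt+\Bigl[\sum_i y^i\omega_{\dot{y}^i}\Bigr]_{t_0}^{t_1},
\end{equation*}
the boundary term vanishing because $y(t_0)=y(t_1)=0$. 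For the ``only if'' implication I argue the contrapositive. Suppose $\gamma_0(c)$, $t_0<c\le t_1$, is conjugate to $\gamma_0(t_0)$, and let $y$ be the associated solution of the Jacobi equation \eqref{2.1.71} on $[t_0,c]$ with $y(t_0)=y(c)=0$, $y\not\equiv0$ on $(t_0,c)$ and $\int_{t_0}^{c}Uy\,dt=0$ (equivalently $\int_{t_0}^{c}y^iP_i\,dt=0$, $P_i$ as in \eqref{2.1.64}). Extend $y$ by $0$ on $[c,t_1]$; it is then a non-null admissible variation, vanishes at both endpoints, and satisfies the linearised constraint, which over $[t_0,t_1]$ reduces to $\int_{t_0}^{c}y^iP_i\,dt=0$. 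Since the accessory Euler--Lagrange (Jacobi) equation makes $\omega_{y^i}-\tfrac{d}{dt}\omega_{\dot{y}^i}$ a fixed multiple of $P_i$, the displayed identity gives $J''(\gamma_0,y)=0$, so $J''$ is not negative (resp.\ positive) definite.

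For the ``if'' implication, assume no point of $(t_0,t_1]$ is conjugate to $\gamma_0(t_0)$; by \eqref{2.1.03} this means $D(t_0,c)\neq0$ for every $c\in(t_0,t_1]$, and since conjugate points are isolated I may also arrange --- extending $\gamma_0$ a little if necessary --- that none lies on a slightly larger interval. The central object is a solution $w(t)$ of the Jacobi equation $\Psi(w)+\mu_0U=0$ (for the appropriate multiplier $\mu_0$) that is \emph{nowhere zero} on $[t_0,t_1]$; its existence is precisely what the non-vanishing of the $3\times3$ determinant $D$ built from $\theta_1,\theta_2,\theta_3$ as in \eqref{2.1.03} guarantees, and it is the constrained analogue of a Jacobi field with no zeros. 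This is also where Proposition~\ref{prop2.4} enters: the definite sign of $h_1$ along $\gamma_0$ is what prevents $D(t_0,c)$ from vanishing.

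With $w$ in hand I pass to the scalar form of $J''$: using the homogeneity relations \eqref{eq2.10}, the integrand $2\omega$ equals along $\gamma_0$, up to a total derivative, the quadratic form whose Euler--Lagrange operator is $\Psi$, namely one of the type $h_1(y')^2+h_2y^2$ in the scalar normal variation $y$ (see \cite{OB}, pp.~119--121). On the complement of the (by construction empty) zero set of $w$ I then substitute $y=wz$ and integrate by parts; using $\Psi(w)+\mu_0U=0$ this brings $J''$ to the form
\begin{equation*}
J''(\gamma_0,y)=\int_{t_0}^{t_1}h_1\,w^2\,(z')^2\,dt+\bigl[\,h_1\,w\,w'\,z^2\,\bigr]_{t_0}^{t_1}+R,
\end{equation*}
where the remainder $R$ collects the terms carrying $\mu_0$ and the constraint functional $\int_{t_0}^{t_1}Uy\,dt$. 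The boundary bracket vanishes since $z$ (hence $y$) vanishes at $t_0,t_1$; $R$ vanishes for variations satisfying the linearised isoperimetric constraint; and by the strengthened Legendre condition --- condition (v) of Theorem~\ref{thm2.2}, equivalently $h_1<0$ along $\gamma_0$ in view of \eqref{eq2.10} --- one has $h_1w^2<0$, so $J''(\gamma_0,y)<0$ unless $z'\equiv0$, i.e.\ unless $y$ is a scalar multiple of $w$; the endpoint conditions then force $y\equiv0$, contradicting non-nullity. The positive-definite case is identical with all inequalities reversed, matching the alternative $h_1>0$ of Proposition~\ref{prop2.4}.

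The step I expect to be the real obstacle is the construction and exploitation of $w$ in the constrained setting, and in particular the verification that the remainder $R$ in the last display genuinely vanishes on constrained variations. Here the comparison object is not a $2\times2$ Wronskian of Jacobi fields but the $3\times3$ determinant $D(a,c)$ of \eqref{2.1.03}, which also records the multiplier direction; one must check both that $D(t_0,c)\neq0$ on the whole interval really does produce a nowhere-vanishing $w$ and that, after $y=wz$, the constraint cross-terms integrate to zero. This is exactly where the argument leans on Bolza's detailed analysis (\cite{OB}, pp.~119--121, p.~242, and relation (75)) and on Hestenes \cite{MH}, rather than on bare Riemannian Jacobi theory; the rest is bookkeeping with the formulas \eqref{2.1.70}, \eqref{2.1.73} and \eqref{eq2.10} already recorded above.
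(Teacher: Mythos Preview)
The paper does not supply its own proof of this theorem: it is stated as a quotation from \cite{LZ1,LZ2} (and ultimately from the classical sources \cite{OB,MH}) and is used as a black box, so there is nothing in the text to compare your argument against line by line.

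That said, your sketch is exactly the classical Jacobi--condition argument that underlies those references, so it is the ``right'' proof in spirit. A few points are worth flagging. First, in the parametric setting the quadratic form $J''$ is automatically degenerate along tangential variations $y=\rho(t)\dot{\gamma}_0(t)$; the paper's Sufficiency Theorem (condition~(iv)) and the reduction to the scalar normal variation both implicitly mod these out, so your conclusion ``$y\equiv0$'' at the end of the ``if'' direction should really read ``$y$ is a reparametrisation of $\gamma_0$''. Second, the ``if'' direction genuinely needs the strengthened Legendre condition (a definite sign of $h_1$), which is not part of the hypothesis as the theorem is literally stated here; you correctly invoke it, but it is an extra assumption, consistent with how the paper actually uses the result via Proposition~\ref{prop2.4} and Proposition~\ref{prop2.5}. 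Third, the part you yourself identify as the obstacle --- producing a nowhere-vanishing accessory extremal $w$ in the \emph{constrained} problem from $D(t_0,c)\neq0$, and showing the multiplier/constraint remainder $R$ drops out after $y=wz$ --- is indeed the substantive step; it does not follow from the unconstrained Jacobi theory and requires the specific machinery in \cite{OB} (\S\S31--33) and \cite{MH}. Your outline gestures at this correctly but does not carry it out, so as written it is a faithful plan rather than a complete proof.
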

 \begin{remark}
 \textnormal{Thus we obtain another criteria for detecting conjugate points using calculus of variation techniques by Bolza. It should be noted that so far this criteria is not explored  while solving isoperimetric problems. Thus our approach here differs from others.}
 \end{remark}

 \section{The isoperimetric problem in Randers Poincar\'e disc with respect to different volume forms}
  In this section, we prove Theorem \ref{thm2.1} . We begin by proving the following proposition:\\
  
 \textbf{Proof of Proposition \ref{prop2.001}}
 Since $\beta$ is an exact one form and has constant norm with respect to $\alpha$, we can write
\begin{equation}\label{eq2.1}
    \beta= df= \frac{\partial f}{\partial x^1}dx^1+\frac{\partial f}{\partial x^2}dx^2 \quad \textnormal{and} \quad  \|\beta \|^2_{\alpha}= c^2,
\end{equation}
for some constant $c$. Now we calculate $ \|\beta \|^2_{\alpha}$ as follows:
\begin{equation}\label{eq2.3}
 \|\beta \|^2_{\alpha}=a^{ij}\frac{\partial f}{\partial x^i}\frac{\partial f}{\partial x^j}= \frac{(1-(x^1)^2-(x^2)^2)^2}{4}\left(\frac{\partial f}{\partial x^1}\right)^2+\frac{(1-(x^1)^2-(x^2)^2)^2}{4}\left(\frac{\partial f}{\partial x^2}\right)^2.   
\end{equation} 
Let us assume
\begin{equation}\label{eq2.4}
    r:= \sqrt{(x^1)^2+(x^2)^2} \quad \textnormal{and} \quad  f(x^1,x^2):= \phi(r).
\end{equation} 
Differentiating the above equation with respect to $x^1$ and $x^2$ we get the following,
\begin{equation}\label{eq2.5}
 \frac{\partial r}{\partial x^1} =\frac{x^1}{\sqrt{(x^1)^2+(x^2)^2}}, \quad \frac{\partial r}{\partial x^2} = \frac{x^2}{\sqrt{(x^1)^2+(x^2)^2}} 
\end{equation}
and
\begin{equation}\label{eq2.7}
    \frac{\partial f}{\partial x^1}= \frac{\partial \phi}{\partial r}\frac{\partial r}{\partial x^1}= \frac{x^1}{\sqrt{(x^1)^2+(x^2)^2}}\frac{\partial \phi}{\partial r}, \quad \textnormal{and} \quad \frac{\partial f}{\partial x^2}= \frac{\partial \phi}{\partial r}\frac{\partial r}{\partial x^2}= \frac{x^2}{\sqrt{(x^1)^2+(x^2)^2}}\frac{\partial \phi}{\partial r}.
\end{equation}
Substituting these values in \eqref{eq2.3} we obtain,
\begin{equation}
    (1-r^2)^2\left(\frac{\partial \phi}{\partial r}\right) ^2=c^2.
\end{equation}
Solving the above equation we get,
\begin{equation}
    \phi(r)= \frac{c}{2}\log\frac{1+r}{1-r}.
\end{equation}
Substituting the value of $r$ in the above equation we obtain,
\begin{equation}
    f(x^1,x^2)= \frac{c}{2}\log \frac{1+\sqrt{(x^1)^2+(x^2)^2}}{1-\sqrt{(x^1)^2+(x^2)^2}}.
\end{equation}
Now from $\beta=df$, we obtain $\beta$ as given in the proposition.\\

To prove Proposition \ref{prop2.0011} we need the following  Yasuda-Shimada theorem:\\

\textbf{Yasuda-Shimada theorem} \cite{BCZ} A Randers space with  metric $F=\alpha+\beta$, where $\alpha=\sqrt{a_{ij}y^iy^j}$ and $\beta=b_iy^i$ has constant negative flag curvature $\frac{-\lambda^2}{4}$  if and only if the underlying Riemannian metric $\alpha$ has a constant negative sectional curvature $\frac{-\lambda^2}{4}$, the one-form $\beta$ is exact  and satisfies the system of partial differential equations
\begin{equation}\label{2.1}
    b_{i.x^j}-b_k\gamma^k_{ij}=\lambda(a_{ij}-b_ib_j), \qquad i,j=1,2.
\end{equation}
where, $\gamma^k_{ij}$ are the christoffel symbols of the Riemannian metric $\alpha$.\\

\textbf{Proof of Proposition \ref{prop2.0011}}
The Randers Poincar\'{e} disc $(\mathbb{D}, F=\alpha+\beta)$ has constant curvature if the one-form $\beta$  satisfy \eqref{2.1}.\\
From Proposition \ref{prop2.001} we have 
\begin{equation*}
b_i=\frac{\delta_{ij}x^j}{(1-(x^1)^2-(x^2)^2)^2\sqrt{((x^1)^2+(x^2)^2)^2}}, \quad
a_{ij}=\frac{4\delta_{ij}}{(1-(x^1)^2+(x^2)^2)^2}.    
\end{equation*}
Also for the hyperbolic metric $\alpha$, the Chrystoffel symbols are given by
\begin{equation*}
\left(\gamma^1_{ij}\right)=\frac{2}{1-(x^1)^2-(x^2)^2}\begin{pmatrix}
x^1 & x^2 \\ x^2 & -x^1 \end{pmatrix} \qquad
\textnormal{and} \qquad \left(\gamma^2_{ij}\right)=\frac{2}{1-(x^1)^2-(x^2)^2}\begin{pmatrix}
-x^2 & x^1 \\ x^1 & x^2 \end{pmatrix}.  
\end{equation*}
\\

Thenefore, it can be easily seen that one-form $\beta$ does not satisfy \eqref{2.1}. Hence, we have the proof of the proposition.\\

\begin{proposition}\label{prop2.01}
      Let $(\mathbb{D},F = \alpha+\beta)$ be the Randers  Poincar\'e disc with $\beta=bdf$ given in Proposition \ref{prop2.001}.  Let $\gamma(t) = (x^{1}(t),x^{2}(t))$ be a simple closed curve in $\mathbb{D}$, with $t\in[t_{0},t_{1}]$ and $\gamma(t_{0})=\gamma(t_{1})$. Then\\
      (i) The  Randers length $L$ of the curve  $\gamma$ is given by,
      \begin{equation}\label{2.1.05}
       L(\gamma)=\int\limits_{t_{0}}^{t_{1}}\frac{2\sqrt{(dx^1)^2+(dx^2)^2}}{1-(x^1)^2-(x^2)^2} dt.
      \end{equation}
      (ii) The Busemann-Hausdorff area $A_{BH}$ enclosed by a simple closed curve $\gamma(t)$ is given by,
       \begin{equation}\label{2.1.06}
      A_{BH} =2(1-b^2)^{\frac{3}{2}}\int\limits_{t_{0}}^{t_{1}}\frac{x^{1}\dot{x}^{2}-x^{2}\dot{x}^{1}}{1-(x^1)^2-(x^2)^2}dt.
      \end{equation}
      (iii) The Holmes-Thompson area $A_{HT}$ enclosed by a simple closed curve $\gamma(t)$ is given by,
             \begin{equation}\label{2.1.006}
            A_{HT} =2\int\limits_{t_{0}}^{t_{1}}\frac{x^{1}\dot{x}^{2}-x^{2}\dot{x}^{1}}{1-(x^1)^2-(x^2)^2}dt.
            \end{equation}
(iv) The maximum area $A_{max}$ with respect to the volume form $dV_{max}$ enclosed by a simple closed curve $\gamma(t)$ is given by,
             \begin{equation}\label{2.1.0006}
            A_{max} =2(1+b)^3\int\limits_{t_{0}}^{t_{1}}\frac{x^{1}\dot{x}^{2}-x^{2}\dot{x}^{1}}{1-(x^1)^2-(x^2)^2}dt.
            \end{equation}
(v) The minimum area $A_{min}$ with respect to the volume form $dV_{min}$ enclosed by a simple closed curve $\gamma(t)$ is given by,
             \begin{equation}\label{2.1.1006}
            A_{min} =2(1-b)^3\int\limits_{t_{0}}^{t_{1}}\frac{x^{1}\dot{x}^{2}-x^{2}\dot{x}^{1}}{1-(x^1)^2-(x^2)^2}dt.
            \end{equation}
      \end{proposition}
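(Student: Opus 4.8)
The plan is to split the proof into the length formula (i), which rests on the exactness of $\beta$, and the four area formulas (ii)--(v), which all reduce through Lemma \ref{lem2.1} to a single computation of the hyperbolic area enclosed by $\gamma$. For (i), I would write $L(\gamma) = \int_{t_0}^{t_1} F(\gamma,\dot\gamma)\,dt = \int_{t_0}^{t_1}\alpha(\gamma,\dot\gamma)\,dt + \int_{t_0}^{t_1}\beta(\dot\gamma)\,dt$, and observe that since $\beta = df$ for the explicit function $f$ produced in Proposition \ref{prop2.001}, the last integral is $\int_{t_0}^{t_1}\frac{d}{dt}\bigl(f(\gamma(t))\bigr)\,dt = f(\gamma(t_1)) - f(\gamma(t_0)) = 0$, because $\gamma$ is closed, i.e. $\gamma(t_0) = \gamma(t_1)$. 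Substituting the Poincar\'e expression for $\alpha$ into the remaining term then gives \eqref{2.1.05}.

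For (ii)--(v): by Lemma \ref{lem2.1}, at $n=2$ and with $\|\beta\|_\alpha = b$, one has $dV_{BH} = (1-b^2)^{3/2}\,dV_\alpha$, $dV_{HT} = dV_\alpha$, $dV_{\max} = (1+b)^3\,dV_\alpha$ and $dV_{\min} = (1-b)^3\,dV_\alpha$. Hence all four areas are the corresponding constants times the hyperbolic area $A_\alpha(D) = \int_D dV_\alpha$, so it is enough to show $A_\alpha(D) = 2\int_{t_0}^{t_1}\frac{x^1\dot x^2 - x^2\dot x^1}{1-(x^1)^2-(x^2)^2}\,dt$; comparison with the displayed formulas then confirms (ii), (iv), (v) by scaling and (iii) directly since $dV_{HT}=dV_\alpha$. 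To obtain this line integral I would first compute $\sqrt{\det(a_{ij})} = 4/(1-(x^1)^2-(x^2)^2)^2$ for the Poincar\'e metric, so that $A_\alpha(D) = \int_D \frac{4}{(1-(x^1)^2-(x^2)^2)^2}\,dx^1\,dx^2$, and then apply Green's theorem with the $1$-form $\omega := \frac{2(x^1\,dx^2 - x^2\,dx^1)}{1-(x^1)^2-(x^2)^2}$: a direct differentiation shows $d\omega = \frac{4}{(1-(x^1)^2-(x^2)^2)^2}\,dx^1\wedge dx^2$ on $\mathbb{D}$. Since $\omega$ is smooth on all of the open disc (its only singularity lies on $\partial\mathbb{D}$), Green's theorem applies to the region $D$ bounded by the positively oriented simple closed piecewise-smooth curve $\gamma$, yielding $A_\alpha(D) = \oint_\gamma\omega = 2\int_{t_0}^{t_1}\frac{x^1\dot x^2 - x^2\dot x^1}{1-(x^1)^2-(x^2)^2}\,dt$.

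Everything here is elementary, so I do not expect a genuine obstacle. The only points deserving a moment of care are: the exponent bookkeeping $(n+1)/2$ versus $n+1$ in Lemma \ref{lem2.1} specialized to $n=2$; fixing the positive orientation of $\gamma$ so that Green's theorem returns a nonnegative area (matching the normalization in the formulas); and confirming that the primitive $\omega$ is regular throughout $\mathbb{D}$, which is immediate from $1-(x^1)^2-(x^2)^2 > 0$ on the open disc, so that in particular no puncture at the origin is needed even though the circles $\gamma_0 = (a\cos t, a\sin t)$ enclose it. The conceptual content is just the two observations that exactness of $\beta$ kills its contribution to the closed-curve length, and that all four Finsler areas collapse to constant multiples of the hyperbolic area.
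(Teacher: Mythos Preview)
Your proposal is correct and follows essentially the same route as the paper: for (i) the paper likewise splits $L=\int\alpha+\int\beta$ and uses that $\beta=df$ together with $\gamma(t_0)=\gamma(t_1)$ to kill the second integral, and for (ii)--(v) it invokes Lemma~\ref{lem2.1} and then applies Green's theorem with $P=-\dfrac{x^2}{1-(x^1)^2-(x^2)^2}$, $Q=\dfrac{x^1}{1-(x^1)^2-(x^2)^2}$, which is exactly your $\omega$ up to the factor $2$. The only (cosmetic) difference is organisational: you reduce all four volume forms to $dV_\alpha$ first and run Green once, whereas the paper treats the Busemann--Hausdorff case in detail and declares (iii)--(v) analogous.
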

      \begin{proof}
 $(i)$ From \eqref{eqn2.1} the length of the curve $\gamma$ in the Randers Poincar\'e disc is given by 
 \begin{equation}
 \linebreak
        L(\gamma)=\int\limits_{t_{0}}^{t_{1}}\left( \alpha +\beta\right)  dt \\
        =\int\limits_{t_{0}}^{t_{1}}\frac{2\sqrt{(\dot{x}^1)^2+(\dot{x}^2)^2}}{1-(x^1)^2-(x^2)^2}dt+\int\limits_{t_{0}}^{t_{1}}\dfrac{d}{dt}\left( \log\frac{1+\sqrt{(x^1)^2+(x^2)^2}}{1-\sqrt{(x^1)^2+(x^2)^2}}\right) dt.
       \end{equation}
       As $\gamma(t)$ is a closed curve, $\displaystyle{\int\limits_{t_{0}}^{t_{1}}\dfrac{d}{dt}\left( \log\frac{1+\sqrt{(x^1)^2+(x^1)^2}}{1-\sqrt{(x^1)^2+(x^2)^2}}\right) dt=0}$. Hence $(i)$ is proved.\\
       
        $(ii)$ From Lemma 2.1 the Busemann-Hausdorff volume form of a Randers metric $\alpha+\beta$ is given by $\displaystyle{\frac{4(1-b^2)^{\frac{n+1}{2}}}{(1-(x^1)^2-(x^2)^2)^2}dx}$. Since in our case $n=2$, we have Busemann-Hausdorff volume form of $F$ is $\displaystyle{\frac{4(1-b^2)^{\frac{3}{2}}}{(1-(x^1)^2-(x^2)^2)^2}dx^1dx^2}$. Hence the surface area enclosed by the simple closed curve $\gamma(t)$ is given by
 \begin{equation}
 A_{BH}=\iint\limits_{R}\frac{4(1-b^2)^{\frac{3}{2}}}{(1-(x^1)^2-(x^2)^2)^2}dx^1dx^2,
 \end{equation}
 where $R$ is the region enclosed by the curve $\gamma(t)$.
 Using Green's Theorem, 
 \begin{equation*}
     \int_{\partial R}Pdx^1+Qdx^2=\iint_R\left( \frac{\partial Q}{\partial x^1}-\frac{\partial P}{\partial x^2}\right)dx^1dx^2
 \end{equation*}
and choosing $\displaystyle{P=-\frac{x^2}{1-(x^1)^2-(x^2)^2}}$ and $\displaystyle{Q=\frac{x^1}{1-(x^1)^2-(x^2)^2}}$ we obtain \eqref{2.1.06}.\\

 The proof of $(iii)$, $(iv)$, $(v)$ are analogous to $(ii)$. 
      \end{proof}\\

\subsection{The Isoperimetric problem with respect to the Busemann Hausdorff volume form}
In this subsection, we first solve the isoperimetric problem for the Busemann-Hausdorff volume form. For this we first show that, along the isoperimetric circles the Lagrange multiplier $\lambda<0$. \\
Let us consider the functional given by \eqref{eq2.01}, where $L$ and $A_{BH}$ are given by \eqref{2.1.05} and \eqref{2.1.06}, respectively. We have, by (\ref{2.03})
 \begin{equation*}
h= f+\lambda g, \;\;{\mbox{where}}
\end{equation*}
\begin{equation}\label{2.1.68}
f=2(1-b^2)^{\frac{3}{2}}\frac{x^{1}\dot{x}^{2}-x^{2}\dot{x}^{1}}{1-(x^1)^2-(x^2)^2}, \quad g=\frac{2\sqrt{(dx^1)^2+(dx^2)^2}}{1-(x^1)^2-(x^2)^2}.
\end{equation}
Now we parametrize  the curve $\gamma$ in polar coordinate system as follows:
 \begin{equation}\label{2.08}
\gamma(t) = (x^{1}(t),x^{2}(t)) = (r(t)\cos t, r(t)\sin t).
\end{equation}
Differentiate $\gamma(t)$ with respect to $t$ we get,
\begin{equation}\label{2.09}
\dot{\gamma}(t) = (\dot{x}^{1}(t),\dot{x}^{2}(t)) =(\dot{r}(t)\cos t-r(t)\sin t, \dot{r}(t)\sin t+r(t)\cos t).
\end{equation}
Hence, along $\gamma(t)$  we get,
 \begin{equation}\label{2.10}
x^{1}\dot{x}^2-x^{2}\dot{x}^1 = |x|^2 = r^2,~~ |\dot{x}|^{2} = r^{2}+\dot{r}^{2}.
\end{equation}
 Using \eqref{2.10} in \eqref{2.1.68} we get,
\begin{equation}\label{2.1.67}
h(r,\dot{r}, t) = \frac{2(1-b^2)^\frac{3}{2}r^2}{1-r^2}+ \lambda \frac{\sqrt{r^2+\dot{r}^2}}{1-r^2} .
\end{equation}
The Euler-Lagarange equation of $J$ is,
\begin{equation}\label{2.12}
\frac{\partial h}{\partial r}-\frac{d}{dt}\frac{\partial h}{\partial \dot{r}}=0.
\end{equation}
From the Euler- Lagrange equation given in \eqref{2.12} we have,
\begin{equation}
\begin{split}
\frac{4(1-b^2)^\frac{3}{2}r}{(1-r^2)^2}+ \lambda \frac{r+r^3+2r\dot{r}}{(1-r^2)^2}   -\lambda\frac{d}{dt}\left( \frac{2\dot{r}}{\sqrt{r^2+\dot{r}^2}(1-r^2)}\right)   =0.
\end{split}
\end{equation}
\begin{proposition}
If the circles
$\gamma_0(t)=(a \cos t, a \sin t)$ centered at the origin, where $a \in (0, 1)$ and $t \in [0, 2\pi]$  satisfies the Euler-Lagrange equation, then $\lambda=-\frac{2a(1-b^2)^{\frac{3}{2}}}{1+a^2}$.
\end{proposition}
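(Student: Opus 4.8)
The plan is to substitute the circle $\gamma_0(t)=(a\cos t,a\sin t)$ directly into the Euler--Lagrange equation \eqref{2.12} for the reduced functional and to solve the resulting algebraic equation for $\lambda$. In polar form $\gamma_0$ corresponds to $r(t)\equiv a$ with $a\in(0,1)$, so that $\dot r(t)\equiv 0$ and $\ddot r(t)\equiv 0$ for all $t\in[0,2\pi]$; along such a curve the reduced integrand is $h=f+\lambda g$ with $f=\dfrac{2(1-b^2)^{3/2}r^2}{1-r^2}$ and $g=\dfrac{2\sqrt{r^2+\dot r^2}}{1-r^2}$, as recorded in \eqref{2.1.68}, \eqref{2.10} and \eqref{2.1.67}.

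First I would dispose of the $\tfrac{d}{dt}$-term in \eqref{2.12}. Since $h$ depends on $\dot r$ only through $g$, one has $\dfrac{\partial h}{\partial \dot r}=\dfrac{2\lambda\,\dot r}{(1-r^2)\sqrt{r^2+\dot r^2}}$, which is a smooth function of $(r,\dot r)$ near $r=a\neq 0$ and which vanishes identically along $\gamma_0$ because $\dot r\equiv 0$ there. Hence the total derivative $\dfrac{d}{dt}\!\left(\dfrac{\partial h}{\partial \dot r}\right)$ is also identically zero along $\gamma_0$; equivalently, expanding it as $h_{\dot r\dot r}\ddot r+h_{r\dot r}\dot r$ (there is no explicit $t$-dependence in $h$), each term vanishes since $\dot r=\ddot r=0$.

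Next I would compute $\dfrac{\partial h}{\partial r}$ and evaluate it along $\gamma_0$. Differentiating $f$ and $g$ gives $\dfrac{\partial h}{\partial r}=\dfrac{4(1-b^2)^{3/2}r}{(1-r^2)^2}+\lambda\,\dfrac{2\,(r+r^3+2r\dot r^2)}{\sqrt{r^2+\dot r^2}\,(1-r^2)^2}$, and at $(r,\dot r)=(a,0)$, using $\sqrt{a^2}=a$, this becomes $\dfrac{4(1-b^2)^{3/2}a}{(1-a^2)^2}+\lambda\,\dfrac{2(1+a^2)}{(1-a^2)^2}$. Now \eqref{2.12} forces this to equal $\tfrac{d}{dt}(\partial h/\partial\dot r)=0$; clearing the positive factor $(1-a^2)^{-2}$ yields $4(1-b^2)^{3/2}a+2\lambda(1+a^2)=0$, whence $\lambda=-\dfrac{2a(1-b^2)^{3/2}}{1+a^2}$. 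Since along $\gamma_0$ every quantity above is constant in $t$, and since the coefficient $\dfrac{2(1+a^2)}{(1-a^2)^2}$ of $\lambda$ is nonzero, this value of $\lambda$ is the unique one for which \eqref{2.12} holds (for all $t$), which is exactly the assertion.

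There is no serious obstacle here — the argument is a short, explicit computation. The only points requiring a little care are: (i) justifying that the $\tfrac{d}{dt}$-term in the Euler--Lagrange equation drops out, which I would base on the observation that $\partial h/\partial\dot r$ is proportional to $\dot r$ (a reflection of the one-homogeneity of $g$ in $\dot\gamma$) and hence vanishes on the constant-$r$ curve, and (ii) correctly carrying the overall factor $2$ in $g=\tfrac{2\sqrt{r^2+\dot r^2}}{1-r^2}$ through the differentiation, since mishandling it would change the final constant; the hypothesis $a\in(0,1)$ (in particular $a\neq 0$) is used to make $\sqrt{r^2+\dot r^2}$ smooth at the relevant point.
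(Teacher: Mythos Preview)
Your proposal is correct and follows essentially the same approach as the paper: substitute the circle $r(t)\equiv a$ into the reduced Euler--Lagrange equation \eqref{2.12}, observe that the $\tfrac{d}{dt}(\partial h/\partial\dot r)$ term vanishes since $\dot r\equiv 0$, and solve the remaining algebraic relation for $\lambda$. Your write-up is in fact more careful than the paper's---you explicitly justify why the $\tfrac{d}{dt}$-term drops out and track the factor of $2$ in $g$ correctly, whereas the paper simply asserts that the equation ``reduces to'' $\lambda(1+a^2)=-2a(1-b^2)^{3/2}$.
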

\begin{proof}
Along $\gamma_0(t)=(a\cos t, a\sin t)$, $r$ is constant. Therefore, the above equation reduces to,
      \begin{equation}\label{2.1.0}
      \lambda(1+a^2) =-2a(1-b^2)^{\frac{3}{2}}.
      \end{equation}
Since, $b<1$, we obtain
 \begin{equation}\label{eq2.22}
 \lambda=-\frac{2a(1-b^2)^{\frac{3}{2}}}{1+a^2}<0 .
 \end{equation}
\end{proof}

\begin{proposition}
The circles
\begin{equation}\label{2.14}
\gamma_0=(a\cos t, a\sin t), \qquad a<1
\end{equation} are normal with respect to the Randers Poincar\'{e} metric.
\end{proposition}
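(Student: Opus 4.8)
The plan is to verify the normality condition straight from Definition \eqref{2.1.64}: the circle $\gamma_0$, which already satisfies the Euler--Lagrange equation, is \emph{normal} exactly when the pair $\bigl(P_1(t),P_2(t)\bigr)$, with $P_i=g_{x^i}-\frac{d}{dt}g_{\dot x^i}$, never vanishes --- equivalently, neither $P_1$ nor $P_2$ is the zero function. Here $g$ is the integrand of the length functional, which by \eqref{2.1.05} and \eqref{2.1.68} is $g=\dfrac{2\sqrt{(\dot x^1)^2+(\dot x^2)^2}}{1-(x^1)^2-(x^2)^2}$; note that this is just the Poincar\'e ($\alpha$-)arclength element and does not involve $b$, since the exact term $\beta=df$ contributes only a boundary term to $L$ and hence nothing to $P_i$. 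So the computation is the one for the hyperbolic Poincar\'e disc.

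First I would compute, with $|x|^2=(x^1)^2+(x^2)^2$ and $|\dot x|=\sqrt{(\dot x^1)^2+(\dot x^2)^2}$,
\begin{equation*}
g_{x^i}=\frac{4x^i|\dot x|}{(1-|x|^2)^2},\qquad g_{\dot x^i}=\frac{2\dot x^i}{(1-|x|^2)\,|\dot x|},\qquad i=1,2.
\end{equation*}
Next I would restrict to $\gamma_0(t)=(a\cos t,a\sin t)$, along which $|x|^2=a^2$ and $|\dot x|=a$, so that $g_{\dot x^1}\big|_{\gamma_0}=-\tfrac{2\sin t}{1-a^2}$ and $g_{\dot x^2}\big|_{\gamma_0}=\tfrac{2\cos t}{1-a^2}$, whence $\frac{d}{dt}g_{\dot x^1}\big|_{\gamma_0}=-\tfrac{2\cos t}{1-a^2}$ and $\frac{d}{dt}g_{\dot x^2}\big|_{\gamma_0}=-\tfrac{2\sin t}{1-a^2}$. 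Combining these gives
\begin{equation*}
P_1=\frac{2(1+a^2)}{(1-a^2)^2}\cos t,\qquad P_2=\frac{2(1+a^2)}{(1-a^2)^2}\sin t,
\end{equation*}
so that $P_1^2+P_2^2=\dfrac{4(1+a^2)^2}{(1-a^2)^4}$, a strictly positive constant for every $a\in(0,1)$. Hence $(P_1,P_2)$ never vanishes and neither $P_1$ nor $P_2$ is the zero function, i.e. $\gamma_0$ is normal.

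There is no genuine obstacle here --- it is a short computation --- but two minor points deserve care. First, one must restrict $g_{\dot x^i}$ to the curve \emph{before} differentiating in $t$ (or, equivalently, use the chain rule with $\ddot x^1=-a\cos t$, $\ddot x^2=-a\sin t$ along $\gamma_0$); both routes yield the same $P_i$. Second, one should be explicit about which notion of "normal" is in force --- the Hestenes notion that the first variation $\int(g_{x^i}y^i+g_{\dot x^i}\dot y^i)\,dt=\int P_i y^i\,dt$ of the length constraint is not identically zero --- since with the parametrization $\gamma_0=(a\cos t,a\sin t)$ each $P_i$ does vanish at isolated values of $t$, whereas the pair $(P_1,P_2)$ does not. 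Using the polar form \eqref{2.08}--\eqref{2.10} with $r\equiv a$ organizes the bookkeeping but changes nothing essential.
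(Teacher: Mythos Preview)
Your proof is correct and follows essentially the same route as the paper: compute $g_{x^i}$ and $g_{\dot x^i}$, restrict to $\gamma_0$, and obtain the identical expressions $P_1=\dfrac{2(1+a^2)}{(1-a^2)^2}\cos t$ and $P_2=\dfrac{2(1+a^2)}{(1-a^2)^2}\sin t$. Your added remarks---that $g$ is independent of $b$ because $\beta$ is exact, and the clarification of which sense of ``never zero'' is meant---are helpful glosses but do not change the argument.
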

\begin{proof}

Differentiating $g$ with respect to $x^1$, $x^2$, $\dot{x}^{1}$ and $\dot{x}^{2}$ respectively, we get,
\begin{equation}\label{2.1.19}
\begin{split}
g_{x^{1}}=\frac{4x^1\sqrt{(\dot{x}^{1})^2+(\dot{x}^2)^2}}{(1-(x^1)^2-(x^2)^2)^2}, \quad g_{x^{2}}=\frac{4x^2\sqrt{(\dot{x}^{1})^2+(\dot{x}^2)^2}}{(1-(x^1)^2-(x^2)^2)^2}, \\ g_{\dot{x}^{1}}=\frac{2\dot{x}^{1}}{(1-(x^1)^2-(x^2)^2)\sqrt{(\dot{x}^{1})^2+(\dot{x}^{2})^2}}, \quad  g_{\dot{x}^{2}}=\frac{2\dot{x}^{2}}{(1-(x^1)^2-(x^2)^2)\sqrt{(\dot{x}^{1})^2+(\dot{x}^{2})^2}}.
\end{split}
\end{equation}
Along the circle $\gamma_0$ we have,
\begin{equation}\label{2.1.21}
\begin{split}
g_{x^{1}}= \frac{4a^2\cos t}{(1-a^2)^2}, \quad g_{x^{2}}= \frac{4a^2\sin t}{(1-a^2)^2},\\
g_{\dot{x}^{1}}=-\frac{2\sin t}{1-a^2},\quad g_{\dot{x}^{2}}=\frac{2\cos t}{1-a^2}.
\end{split}
\end{equation}
Since,
\begin{equation*}\label{2.1.22}
P_{j}=g_{x^{j}}-\frac{d}{dt}g_{\dot{x}^{j}},\quad \textnormal{for} ~~j=1,2,
\end{equation*}
along $\gamma_0$ we have,
\begin{equation*}
P_{1}=\frac{2(1+a^2)}{(1-a^2)^2}\cos t, \quad P_{2}=\frac{2(1+a^2)}{(1-a^2)^2}\sin t.
\end{equation*}
Therefore, it follows that, 
\begin{equation}\label{2.1.23}
(P_{1},P_{2})\neq 0.
\end{equation}
Hence the circles $\gamma _{0}$ are normal by \eqref{2.1.64}.
\end{proof}

\begin{proposition}\label{prop2.3}
Let $\gamma_{0}$ be the circle centered at the origin in ${\mathbb{D}}$. For each $(x, \dot{x})$ in a neighbourhood of $\gamma_{0}$,  
if  $\lambda <0$, then the Weierstrass function $E$ of the integral $J$ satisfies 
\begin{equation}\label{2.1.26}
E(x^{1},x^{2},\dot{x}^{1},\dot{x}^{2},u^{1},u^{2})\leq 0.
\end{equation}
Moreover, the equality holds if and only if $(u^{1},u^{2})=k(\dot{x}^{1},\dot{x}^{2})$, where $k$ is positive constant.
\end{proposition}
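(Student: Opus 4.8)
The plan is to exploit the observation that the ``area'' integrand $f$ in \eqref{2.1.68} is \emph{linear} in the velocity $(\dot x^1,\dot x^2)$, so that it makes no contribution to the Weierstrass excess and the whole of $E$ is carried by the ``length'' integrand $g$, which is a conformal multiple of the Euclidean length element. First I would note that a function that is affine in the velocities coincides with its own first-order Taylor expansion in those variables; hence, applying the definition \eqref{2.25} of $E$ to $h=f+\lambda g$, the $f$-contribution $f(t,\gamma,u)-f(t,\gamma,\dot\gamma)-\sum_j(u^j-\dot x^j)f_{\dot x^j}(t,\gamma,\dot\gamma)$ vanishes identically, leaving
\[
E(x^1,x^2,\dot x^1,\dot x^2,u^1,u^2)=\lambda\,E_g(x^1,x^2,\dot x^1,\dot x^2,u^1,u^2),
\]
where $E_g$ is the Weierstrass function of $g$.

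Next I would compute $E_g$ explicitly. Writing $g=\dfrac{2}{1-|x|^2}\,|\dot x|$ with $|x|^2=(x^1)^2+(x^2)^2$ and $|\dot x|$ the Euclidean norm, one has $g_{\dot x^j}=\dfrac{2}{1-|x|^2}\,\dfrac{\dot x^j}{|\dot x|}$, so a direct substitution into \eqref{2.25} gives
\[
E_g=\frac{2}{1-|x|^2}\left(|u|-\frac{\dot x^1u^1+\dot x^2u^2}{|\dot x|}\right).
\]
Since the conformal factor $\dfrac{2}{1-|x|^2}$ is strictly positive on $\mathbb{D}$, the Cauchy--Schwarz inequality $\dot x^1u^1+\dot x^2u^2\le |\dot x|\,|u|$ yields $E_g\ge 0$, with equality exactly when $u$ is a nonnegative multiple of $\dot x$; admissibility of $(x^i,u^i)$ forces $u\ne 0$, so equality holds if and only if $(u^1,u^2)=k(\dot x^1,\dot x^2)$ for some $k>0$.

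Finally I would combine the two steps: for every $(x,\dot x)$ in the neighbourhood of $\gamma_0$ under consideration (in fact throughout $\mathbb{D}$), when $\lambda<0$ we get $E=\lambda E_g\le 0$, and the equality case passes over unchanged, giving $E=0$ iff $(u^1,u^2)=k(\dot x^1,\dot x^2)$, $k>0$. This is exactly \eqref{2.1.26}. There is no genuine obstacle here; the only points worth stating with care are \emph{why} the area term drops out (its linearity in $\dot\gamma$) and the remark that the sign $\lambda<0$ established in \eqref{eq2.22} along $\gamma_0$ is precisely what converts the nonnegative excess $E_g$ of the hyperbolic length into the required nonpositive $E$.
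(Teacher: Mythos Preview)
Your proof is correct and follows essentially the same approach as the paper: both reduce $E$ to $\lambda$ times the Weierstrass excess of the hyperbolic length $g$ and then apply Cauchy--Schwarz, with equality iff $u$ is a positive multiple of $\dot x$. The only difference is presentational: you make explicit that the area integrand $f$ is linear in $\dot\gamma$ and hence contributes nothing to $E$, whereas the paper simply computes $h_{\dot x^1}$, $h_{\dot x^2}$ directly and lets the $f$-terms cancel in the substitution.
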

\begin{proof}
In this case we obtain,
\begin{equation}\label{2.1.28}
\frac{\partial h}{\partial \dot{x}^{1}}=-\frac{2(1-b^2)^{\frac{3}{2}}x^2}{1-(x^1)^2-(x^2)^2}+\lambda  \frac{2\dot{x}^{1}}{(1-(x^1)^2-(x^2)^2)\sqrt{(\dot{x}^{1})^{2}+(\dot{x}^{2})^{2}}} 
\end{equation}
and
\begin{equation}\label{2.1.29}
\frac{\partial h}{\partial \dot{x}^{2}}=\frac{2(1-b^2)^{\frac{3}{2}}x^1}{1-(x^1)^2-(x^2)^2}+\lambda  \frac{2\dot{x}^{2}}{(1-(x^1)^2-(x^2)^2)\sqrt{(\dot{x}^{1})^{2}+(\dot{x}^{2})^{2}}}.
\end{equation}
Using \eqref{2.1.28} and \eqref{2.1.29} in \eqref{2.25} and simplifying we get,
\begin{equation}\label{2.1.33}
E(x,\dot{x},u)=2\lambda\left( \sqrt{(u^{1})^{2}+(u^{2})^{2}}-\frac{\dot{x}^{1}u^{1}+\dot{x}^{2}u^{2}}{\sqrt{(\dot{x}^{1})^{2}+(\dot{x}^{2})^{2}}}\right) =\frac{2\lambda}{\| \dot{x}\|}\left( \| u\| \| \dot{x}\| -(\dot{x}^{1}u^{1}+\dot{x}^{2}u^{2})\right).
\end{equation}
Now using  the Cauchy-Schwarz  inequality,
\begin{equation}\label{2.1.34}
(\| u\| \| x\| -(\dot{x}^{1}u^{1}+\dot{x}^{2}u^{2}))\ge 0,
\end{equation}
and the fact that, $\lambda \le 0$, we obtain
\begin{equation}\label{2.1.35}
E(x,\dot{x},u)\le 0.
\end{equation}
It is evident that the equality holds if and only if $u=k\dot{x}$, for any constant $k$. Hence, we conclude the proof. 
\end{proof}


 \begin{proposition}\label{prop2.5}
 Consider $(\mathbb{D},F)$ be the Randers Poincar\'e disc. Then along $\gamma_0$, $h_1=\frac{2\lambda}{a(1-a^2)}<0$.
 \end{proposition}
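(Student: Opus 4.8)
The plan is to read off $h_1$ from the homogeneity relations \eqref{eq2.10}, which give $h_{\dot x^1\dot x^1}=(\dot x^2)^2\,h_1$; hence it suffices to compute the second $\dot x^1$-derivative of $h$ along $\gamma_0$ and divide by $(\dot x^2)^2$. The structural point that makes this painless is that the area integrand $f$ in \eqref{2.1.68} is \emph{linear} in the velocities $\dot x^1,\dot x^2$, so all of its second $\dot x$-derivatives vanish identically; consequently $h_{\dot x^1\dot x^1}=\lambda\,g_{\dot x^1\dot x^1}$, and the whole calculation is reduced to the hyperbolic arc-length element $g=\dfrac{2\sqrt{(\dot x^1)^2+(\dot x^2)^2}}{1-(x^1)^2-(x^2)^2}$.

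Next I would differentiate $g$ twice in $\dot x^1$. Since the conformal factor $2/(1-(x^1)^2-(x^2)^2)$ is velocity-independent, this is just the elementary computation of the second $\dot x^1$-derivative of $\sqrt{(\dot x^1)^2+(\dot x^2)^2}$, which comes out as $(\dot x^2)^2$ times a \emph{positive} function of the velocities; this already confirms the $(\dot x^2)^2$-factor predicted by \eqref{eq2.10}, so dividing it out produces an explicit expression for $h_1$ that is a positive multiple of $\lambda$. Evaluating this along the circle $\gamma_0(t)=(a\cos t,a\sin t)$, where $(x^1)^2+(x^2)^2=a^2$ and $(\dot x^1)^2+(\dot x^2)^2=a^2\sin^2 t+a^2\cos^2 t=a^2$, collapses everything to a constant and yields $h_1=\dfrac{2\lambda}{a(1-a^2)}$ along $\gamma_0$.

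It then only remains to invoke the sign of $\lambda$: by \eqref{eq2.22} one has $\lambda=-\dfrac{2a(1-b^2)^{3/2}}{1+a^2}<0$, since $0<a<1$ and $0<b<1$, while $a(1-a^2)>0$; hence $h_1<0$ along $\gamma_0$, which is exactly what is needed to feed into Proposition \ref{prop2.4} (absence of conjugate points). I do not foresee a genuine obstacle — the whole argument is one short explicit differentiation — the only point to watch is that $h_1$ is prescribed by \eqref{eq2.10} only where $(\dot x^1,\dot x^2)\neq 0$, so at the parameter values $t$ where $\dot x^2$ happens to vanish on $\gamma_0$ one should instead read $h_1$ off the companion relation $h_{\dot x^2\dot x^2}=(\dot x^1)^2 h_1$; the consistency of the two prescriptions is already guaranteed by the discussion preceding Proposition \ref{prop2.4}.
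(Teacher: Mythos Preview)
Your approach is correct and essentially identical to the paper's: both compute the second velocity-partials of $h$ along $\gamma_0$ and extract $h_1$ via the homogeneity relations \eqref{eq2.10}, then invoke $\lambda<0$ from \eqref{eq2.22}. Your explicit observation that the area integrand $f$ is linear in $(\dot x^1,\dot x^2)$ (so only $\lambda g$ contributes to $h_{\dot x^i\dot x^j}$) and your care about reading $h_1$ from the companion relation where $\dot x^2$ vanishes are useful clarifications that the paper leaves implicit.
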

 \begin{proof}
 In view of \eqref{2.1.68} we have,
 \begin{equation}
     h_{y^1y^1}=\frac{2\lambda \cos^2 t}{a(1-a^2)}, ~ h_{y^1y^2}=\frac{2\lambda \sin t\cos t}{a(1-a^2)}, ~ h_{y^2y^2}=\frac{2\lambda \sin^2 t}{a(1-a^2)}.
 \end{equation}
 Therefore, comparing with \eqref{eq2.10}, we have that $h_1$ along $\gamma_0$ is $\frac{2\lambda}{a(1-a^2)}$ which is less than zero.
Hence, from Proposition \ref{prop2.4} there are no conjugate points along $\gamma_0$.
 \end{proof}

\begin{proposition}\label{prop2.6}
Along the isoperimetric extremal circle $\gamma_{0}$, the following inequality holds:
\begin{equation}\label{2.1.61}
\sum\limits_{i,j =1}^{2}h_{\dot{x}^{i}\dot{x}^{j}}y^{i}y^{j}\leq 0,
\end{equation}
and the equality holds if and only if $(y^{1},y^{2})=k(\dot{x}^1,\dot{x}^2)$.
\end{proposition}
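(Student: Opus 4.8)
The plan is to reduce the quadratic form on the left of \eqref{2.1.61} to a perfect square and then appeal to Proposition \ref{prop2.5}.

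First I would invoke the homogeneity structure already recorded in \eqref{eq2.10}: since $h=f+\lambda g$ is positively $1$-homogeneous in $(y^1,y^2)$, its velocity-Hessian is forced to have the form
\begin{equation*}
h_{\dot{x}^1\dot{x}^1}=(\dot{x}^2)^2\,h_1,\qquad h_{\dot{x}^1\dot{x}^2}=-\dot{x}^1\dot{x}^2\,h_1,\qquad h_{\dot{x}^2\dot{x}^2}=(\dot{x}^1)^2\,h_1 ,
\end{equation*}
all evaluated along $\gamma_0$, that is, at $x=\gamma_0(t)$ and with direction argument $\dot{\gamma}_0(t)=(-a\sin t,a\cos t)$, so that $h_1$ is exactly the scalar computed in Proposition \ref{prop2.5}. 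As a sanity check one may instead read these three entries straight off \eqref{2.1.68} after setting $r\equiv a$, as was done inside the proof of Proposition \ref{prop2.5}.

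Next I would substitute and complete the square:
\begin{equation*}
\sum_{i,j=1}^{2}h_{\dot{x}^i\dot{x}^j}\,y^i y^j
=h_1\Big((\dot{x}^2)^2 (y^1)^2-2\dot{x}^1\dot{x}^2\,y^1 y^2+(\dot{x}^1)^2 (y^2)^2\Big)
=h_1\big(\dot{x}^2 y^1-\dot{x}^1 y^2\big)^2 .
\end{equation*}
By Proposition \ref{prop2.5}, $h_1=\dfrac{2\lambda}{a(1-a^2)}<0$ along $\gamma_0$ (here $\lambda<0$ by \eqref{eq2.22} and $0<a<1$), so the right-hand side is $\le 0$, giving \eqref{2.1.61}. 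Equality holds iff $\dot{x}^2 y^1-\dot{x}^1 y^2=0$, i.e. iff $(y^1,y^2)$ is proportional to $(\dot{x}^1,\dot{x}^2)$; writing $y^1=k\dot{x}^1$ then forces $y^2=k\dot{x}^2$ with the same $k$, which is the claimed condition $(y^1,y^2)=k(\dot{x}^1,\dot{x}^2)$.

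I do not anticipate a real obstacle here; the only thing to be careful about is the bookkeeping of arguments — the variation field $y=(y^1,y^2)$ must be kept separate from the frozen direction $\dot{\gamma}_0$ at which the Hessian $h_{\dot{x}^i\dot{x}^j}$ is taken, and the homogeneity identities \eqref{eq2.10} must be applied at that frozen direction. Once that is in place, the statement follows at once from the perfect-square identity together with the sign of $h_1$ established in Proposition \ref{prop2.5}.
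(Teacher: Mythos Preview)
Your proposal is correct and follows essentially the same route as the paper: both reduce the quadratic form to the perfect square $h_1(\dot{x}^2 y^1-\dot{x}^1 y^2)^2$ and then use $\lambda<0$ to conclude. The only cosmetic difference is that the paper recomputes the entries $h_{\dot{x}^i\dot{x}^j}$ directly from \eqref{2.1.28}--\eqref{2.1.29}, while you invoke the homogeneity identities \eqref{eq2.10} together with the value of $h_1$ already obtained in Proposition~\ref{prop2.5}; your packaging is arguably tidier and avoids a harmless constant-factor slip in the paper's intermediate formula.
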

\begin{proof}
Differentiating \eqref{2.1.28} with respect to $\dot{x}^1$ and \eqref{2.1.29} with respect to $\dot{x}^2$ yields,
\begin{equation}
h_{\dot{x}^1\dot{x}^1}= \frac{\lambda (\dot{x}^2)^2 }{((\dot{x}^1)^2+(\dot{x}^2)^2)^{\frac{3}{2}}}, ~ h_{\dot{x}^1\dot{x}^2}=-\frac{ \dot{x}^1\dot{x}^2 }{((\dot{x}^1)^2+(\dot{x}^2)^2)^{\frac{3}{2}}} , ~h_{\dot{x}^2\dot{x}^2}= \frac{\lambda (\dot{x}^1)^2 }{((\dot{x}^1)^2+(\dot{x}^2)^2)^{\frac{3}{2}}}.
\end{equation}
Therefore,
 \begin{equation}\label{2.1.63}
\sum\limits_{i,j =1}^{2}h_{\dot{x}^{i}\dot{x}^{j}}y^{i}y^{j} = \frac{\lambda}{((\dot{x}^1)^2+(\dot{x}^2)^2)^{\frac{3}{2}}}(\dot{x}^2y^1-\dot{x}^1y^2)^2 = \frac{\lambda}{a}(y^{1}\cos t+y^{2}\sin t)^{2}.
\end{equation}
As $\lambda$ is negative and $a>0$, the inequality \eqref{2.1.61} follows. Clearly, the equality holds if and only if $(y^1,y^2)=k(\dot{x}^1,\dot{x}^2)$.
\end{proof}

\subsection*{\textbf{Proof of Theorem \ref{thm2.1}}}
In Propositions \ref{prop2.01}-\ref{prop2.6}, we have shown that $\gamma_0$ satisfies all the conditions of Theorem \ref{thm2.2}. Hence, $\gamma_0$ is a solution of the isoperimetric problem with respect to the Busemann-Hausdorff measure. Similarly, it also follows that $\gamma_0$ is a solution of the isoperimetric problem with respect to the Holmes-Thompson, the maximum, the minimum volume forms as these volume forms are just the scalar multiples of the Busemann-Hausdorff volume form.

 \end{document}